\newcommand{\ds}{\displaystyle}
\newcommand{\R}{\mathbb R}
\newcommand{\Z}{\mathbb Z}
\theoremstyle{plain}
\newtheorem{theorem}{Theorem}[section]
\newtheorem{corollary}{Corollary}
\newtheorem{lemma}{Lemma}[section]
\theoremstyle{definition}
\newtheorem{definition}{Definition}[section]
\newtheorem{assumption}{Assumption}
\theoremstyle{definition}
\newtheorem{example}{Example}
\def\C{\mathcal C}
\def\V{\mathcal V}
\begin{document}

\title{Results on stochastic reaction networks with non-mass action kinetics}
\author{David F.~Anderson\thanks{Department of Mathematics, University of
  Wisconsin, Madison, USA.  anderson@math.wisc.edu, grant support from NSF-DMS-1318832 and Army Research Office grant W911NF-14-1-0401.},
  \and
  Tung D.~Nguyen\thanks{Department of Mathematics, University of
  Wisconsin, Madison, USA.  nguyen34@math.wisc.edu.
}
}
\maketitle

\begin{abstract}
 	In 2010, Anderson, Craciun, and Kurtz showed that if a deterministically modeled reaction network is complex balanced, then the associated stochastic model admits a stationary distribution that is a product of Poissons \cite{ACK2010}.  That work spurred a number of followup analyses.  In 2015, Anderson, Craciun, Gopalkrishnan, and Wiuf considered a particular scaling limit of the stationary distribution detailed in \cite{ACK2010}, and proved it is a well known Lyapunov function \cite{ACGW2015}.   In 2016, Cappelletti and Wiuf showed the converse of the main result in \cite{ACK2010}: if a reaction network with stochastic mass action kinetics admits a stationary distribution that is a product of Poissons, then the deterministic model is complex balanced \cite{CW2016}.   In 2017, Anderson, Koyama, Cappelletti, and Kurtz showed that the mass action models considered in \cite{ACK2010} are non-explosive (so the stationary distribution characterizes the limiting behavior).   In this paper, we generalize each of the three followup results detailed above to the case when the stochastic model has a particular form of non-mass action kinetics. 
\end{abstract}

%\tableofcontents

\section{Introduction}

In this paper we prove the existence of a stationary distribution and the non-explosivity of stochastically modeled reaction networks with a specific form of non-mass action kinetics (see \eqref{general intensity}), and generalize two results related to reaction networks with mass action kinetics to the non-mass action setting.

  In \cite{ACK2010}, it was shown that if a deterministically  modeled reaction network (with deterministic mass action kinetics) is complex balanced with complex balanced equilibrium $c\in \R_{\ge 0}^m$, then the associated  stochastic model (with stochastic mass action kinetics and the same choice of rate constants)  admits the stationary distribution
\begin{align}\label{eq:9087097}
	\pi(x) = \prod_{i = 1}^me^{-c_i} \frac{c_i^{x_i}}{x_i!}.
\end{align}
 It was also shown in \cite{ACK2010} that if the intensity functions of the stochastic model are not of mass action, but of the form
\[
 \lambda_k(x)=\kappa_k\prod_{i=1}^m \theta_i(x_i)\theta_i(x_i-1)...\theta_i(x_i-y_{ki}+1)
\]
where $\theta_i: \mathbb{Z}\to\mathbb{R}_{\geq 0}$ and $\theta_i(0) = 0$, then the stochastic model admits the stationary measure
\begin{equation}\label{eq:5678987}
	\pi(x)=\prod_{i=1}^m \frac{c_i^{x_i}}{\theta_i(1)...\theta_i(x_i)}
\end{equation}
where $c$ is  the complex balanced equilibrium of the associated deterministic mass action model with rate constants $\{\kappa_k\}$.  Related work characterizing stationary distributions for stochastic models with non-mass action kinetics can be found in \cite{AC2016}.  Later, we will show that $\pi$ in \eqref{eq:5678987} is summable under some mild growth conditions on $\theta_i$, thus it can be normalized to provide a stationary distribution. %Further, we will show that the system is non-explosive thereby generalizing the main result of \cite{ACKK2017}.

The main result in \cite{ACK2010} related to mass action models (i.e.~the characterization of the stationary distribution for complex balanced models given in \eqref{eq:9087097}) has been the starting point  for a number of research endeavors.  In particular,
\begin{enumerate}
\item The main result of \cite{ACKK2017} gives a condition for a stochastically modeled reaction network to be non-explosive. Applying this result for complex balanced networks, whose stationary distributions are given by \eqref{eq:9087097}, it can be concluded that for any initial distribution, complex balanced stochastic mass-action systems are non-explosive.

\item The converse of the  main result of \cite{ACK2010} related to mass action models was proven in \cite{CW2016}.  That is, it was shown that if \eqref{eq:9087097} is the stationary distribution of a stochastically modeled reaction network with mass action kinetics, then the associated deterministic model with mass action kinetics is complex balanced.

\item  In \cite{ACGW2015}, it was shown that
 a particular scaling limit of the stationary distribution \eqref{eq:9087097} is a well known Lyapunov function.  This result has been useful in the study of large deviations related to stochastic models of chemical systems \cite{ADE2017,GQ2017}.
\end{enumerate}

In this paper we  generalize the three results above to the case of stochastically modeled reaction networks with non-mass action kinetics.

The outline of the remainder of the paper is as follows. In Section \ref{sec:2}, we review the relevant definitions and mathematical models for chemical reaction networks. In Section \ref{sec:3}, we state the previous results on reaction networks that were briefly described above. Finally, in Section \ref{sec:4} we provide our main results, which provide the existence of a stationary distribution and the non-explosivity of stochastically modeled reaction networks with non-mass action kinetics and generalize the previous findings on mass action systems to the non-mass action settings.

\section{Chemical Reaction Networks}
\label{sec:2}

\subsection{Reaction networks and key definitions}
We consider a set of $m$ species $\{S_1,S_2,..,S_m\}$ undergoing a finite number of reaction types enumerated by $k \in \{1,\dots,K\}$. For the $k^{th}$ reaction type we denote by $y_k$ and  $y_k' \in \mathbb{Z}^m_{\geq 0}$ the vectors representing the number of molecules of each species consumed and created in one instance of the reaction. For example, the reaction $S_1+S_2\to 2S_2$ has $y_k=(1,1)$ and $y_k'=(0,2)$, if the system has 2 species. The vectors $y_k$ and $y_k'$ are called \textit{complexes} of the system. $y_k$ is called  the \textit{source complex} and $y_k'$ is called the \textit{product complex}. A complex can be both a source complex and a product complex. 
\begin{definition}
Let $\mathcal{S} =\{S_1,...,S_m\}$, $\mathcal{C}=\cup_k\{y_k,y_k'\}$, and $\mathcal{R}=\cup_k\{y_k\to y_k'\}$ be the sets of species, complexes and reactions respectively. The triple $\{\mathcal{S,C,R}\}$ is called a reaction network.
\end{definition}

To each reaction network $\{\mathcal{S,C,R}\}$, there is a unique directed graph constructed as follows. The nodes of the graph are the complexes. A directed edge is placed from $y_k$ to $y_k'$ if and only if there is a reaction $y_k \to y_k'$. Each connected component is called a \textit{linkage class} of the graph. We denote by $\ell$ the number of linkage class.

\begin{definition}
A reaction network $\{\mathcal{S,C,R}\}$ is called \textit{weakly reversible} if the associated directed graph is strongly connected.
% for any reaction $y_k \to y_k'$, there is a sequence of reactions beginning with $y_k'$ as a source complex and ending with $y_k$ as a product complex.
\end{definition}

\begin{definition}
The linear subspace $S=\text{span}_k\{y_k'-y_k\}$ generated by all reaction vectors is called the \textit{stoichiometric subspace} of the network. For $c\in \mathbb{R}^m_{\geq 0}$ we say $c+S=\{x\in \mathbb{R}^m |x=c+s$ for some $s\in S\}$ is a \textit{stoichiometric compatibility class}, $(c+S)\cap \mathbb{R}^m_{\geq 0}$ is a \textit{non-negative stoichiometric compatibility class}, and $(c+S)\cap \mathbb{R}^m_{> 0}$ is a \textit{positive stoichiometric compatibility class}. Denote $dim(S)=s$.
\end{definition}
Finally, we provide the definition of the \textit{deficiency} of a reaction network \cite{Feinberg1979}.
\begin{definition}
The \textit{deficiency} of a chemical reaction network $\{\mathcal{S,C,R}\}$ is $\delta=|\mathcal{C}|-\ell-s$, where $|\mathcal{C}|$ is the number of complexes, $\ell$ is the number of linkage classes, and $s$ is the dimension of the stoichiometric subspace of the network.
\end{definition}

\subsection{Dynamical system models}
\subsubsection{Stochastic model}
The most common stochastic model for a reaction network $\{\mathcal{S,C,R}\}$ treats the system as a continuous time Markov chain whose state at time $t$,  $X(t)\in \Z^m_{\ge 0}$, is a vector giving the number of molecules of each species present with each reaction modeled as a possible transition for the chain. The model for the $k^{th}$ reaction is determined by the source and product complexes of the reaction, and a function $\lambda_k$ of the state that gives the transition intensity, or rate, at which the reaction occurs. In the biological and chemical literature, transition intensities are referred to as propensities. 

Given that the $k^{th}$ reaction happens at time $t$, the state is updated by the addition of the reaction vector $y_k'-y_k$,
\begin{equation*}
X(t)=X(t-)+y_k'-y_k.
\end{equation*}
A common choice for the intensity functions $\lambda_k$ is to assume the system satisfies the stochastic version of \textit{mass action kinetics}. In this case, the functions have the form
\begin{equation}\label{intensity}
\lambda_k(x)=\kappa_k \prod_{i=1}^m \frac{x_i!}{(x_i-y_{ki})!} 1_{\{x_i\geq y_{ki}\}}
\end{equation} 
where $\kappa_k>0$ is  called the \textit{rate constant}. Under the assumption of mass action kinetics and a non-negative initial condition, it follows that the dynamics of the system is confined to the particular non-negative stoichiometric compatibility class determined by the initial value $X(0)$, namely $X(t)\in (X(0)+S)\cap \mathbb{R}^m_{\geq 0}$.

Simple book-keeping implies that $X(t)$ satisfies
\[
	X(t) = X(0) + \sum_k R_k(t) (y_k' - y_k),
\]
where $R_k(t)$ gives the number of times reaction $k$ has occurred by time $t$.    Kurtz showed that $X$ can be represented as the solution to the stochastic equation 
\begin{equation}\label{eq:RTC}
X(t)=X(0)+\sum_kY_k(\int_0^t \lambda_k(X(s))ds)(y_k'-y_k),
\end{equation}
where the $Y_k$ are independent unit-rate Poisson process \cite{Kurtz80}.

Another way to characterize the models of interest is via Kolmogorov's forward equation, termed the chemical master equation in the biology and chemistry literature, which describes how the distribution of the process changes in time.  Letting $p_\mu(x,t)$ give the probability that $X(t) = x$ assuming an initial distribution of $\mu$, the forward equation is
\begin{equation*}
		\frac{d}{dt} p_\mu(x,t) = \sum_k \lambda_k(x-(y_k' - y_k)) p_\mu(x-(y_k' - y_k),t) - \sum_k \lambda_k(x) p_\mu(x,t).
\end{equation*}
Constant solutions to the forward equation, i.e.~those satisfying 
\begin{equation*}
\sum_k \pi(x-y_k'+y_k)\lambda_k(x-y_k'+y_k)=\pi(x)\sum_k\lambda_k(x)
\end{equation*}
are stationary measures for the process, and if they are summable they can be normalized to give a stationary distribution.  Assuming the associated stochastic model is non-explosive, stationary distributions characterize the long-time behavior of the stochastically modeled system.

Finding  stationary distributions is in general a difficult task. However, as discussed in the Introduction and in later sections, an explicit form for $\pi(x)$ can be found when the associated deterministic mass action model has a complex balance equilibrium.

\subsubsection{Deterministic model}

Under an appropriate scaling limit (the classical scaling detailed in Section \ref{sec:classical}) the continuous time Markov chain model of the previous section becomes
\begin{equation}\label{det}
x(t)=x(0)+\sum_k(\int_0^t f_k(x(s))ds)(y_k'-y_k)
\end{equation} 
where 
\begin{equation}\label{det mak}
f_k(x)=\kappa_kx^{y_k}
\end{equation}
where $\kappa_k>0$ is the rate constant, and where  for two vectors $u,v\in \mathbb{R}^m_{\geq 0}$ we denote $u^v = \prod_i u_i^{v_i}$ with the convention $0^0=1$.  Later, we will also utilize the notation $uv$ for the vector whose $i$th component is $u_iv_i$.

 We say that the deterministic system \eqref{det} has \textit{deterministic mass action kinetics} if the rate functions $f_k$ have the form \eqref{det mak}. The system \ref{det} is equivalent to the system of ODEs 
\begin{equation}\label{det ODE}
\dot{x}=\sum_k\kappa_kx^{y_k}(y_k'-y_k).
\end{equation}
The trajectory with initial condition $x_0$ is confined to the non-negative stoichiometric compatibility class $(x_0+S)\cap \mathbb{R}^m_{\geq 0}$.

Some mass action  systems have complex balanced equilibria \cite{Horn1972,HJ1972}, which has been shown to play an important role in many biological mechanisms \cite{CLWBNE2012, Gnacadja2009, KZO2012, Sontag2001}. An equilibrium point $c$ is said to be complex balanced if for all $z\in \mathcal{C}$, we have
\begin{equation}
\sum_{k: \ y_k'=z}\kappa_kc^{y_k}=\sum_{k: \ y_k=z}\kappa_kc^{y_k}
\end{equation}
where the sum on the left is over reactions for which $z$ is the product complex and the sum on the right is over reactions for which $z$ is the source complex.

In \cite{HJ1972} it was shown that if there exists a complex balanced equilibrium $c\in \mathbb{R}^m_{\geq 0}$ then
\begin{enumerate}
\item There is one, and only one, positive equilibrium point in each positive stoichiometric compatibility class.
\item Each such equilibrium point is complex balanced.
\item Each such complex balanced equilibrium point is locally asymptotically stable relative to its stoichiometric compatibility class. 
\end{enumerate}
In \cite{C2016}, a proof is presented showing global stability relative to the stoichiometric compatibility class.  Because of the above, we say that a system is complex balanced if it admits a complex balanced equilibrium.

The condition of a reaction network being complex balanced can be checked more conveniently by using a classical result \cite{Feinberg87}.  See also \cite{Feinberg1979, Gun2003}. 
\begin{theorem}
If the reaction system is weakly reversible and has a deficiency of zero, then for any choice of rate constants $\{\kappa_k\}$ the deterministically modeled system with mass action kinetics is complex balanced.
\end{theorem}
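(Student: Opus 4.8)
The plan is to recast the mass-action system in Horn--Jackson matrix form and thereby reduce the existence of a complex balanced equilibrium to a solvable linear condition. Enumerate the complexes as $z_1,\dots,z_n$ with $n=|\C|$, let $Y$ be the $m\times n$ matrix whose $j$th column is $z_j$, and define $\Psi:\R^m_{>0}\to\R^n_{>0}$ by $\Psi(x)_j=x^{z_j}$. Let $A_\kappa$ be the graph Laplacian of the reaction graph, i.e.\ the $n\times n$ matrix with off-diagonal entries $(A_\kappa)_{j'j}=\kappa_{j\to j'}$ (the rate constant of the reaction $z_j\to z_{j'}$, or $0$ if that reaction is absent) and diagonal entries chosen so that each column sums to zero. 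A direct computation shows that \eqref{det ODE} is exactly $\dot x=YA_\kappa\Psi(x)$ and that a positive point $c$ is complex balanced if and only if $A_\kappa\Psi(c)=0$. Thus it suffices to produce a single $c\in\R^m_{>0}$ with $\Psi(c)\in\ker A_\kappa$.

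The first key step uses weak reversibility. Since the reaction graph is strongly connected on each of its $\ell$ linkage classes, the Matrix--Tree theorem (equivalently, the Markov-chain tree theorem applied to $A_\kappa$) shows that $\ker A_\kappa$ is $\ell$-dimensional and is spanned by vectors $\xi^{(1)},\dots,\xi^{(\ell)}$, where $\xi^{(i)}$ is supported on linkage class $i$, has strictly positive entries there (each entry being a sum of products of rate constants over spanning trees rooted at the corresponding complex), and vanishes off class $i$. Consequently the strictly positive vectors in $\ker A_\kappa$ are exactly the combinations $\sum_i a_i\xi^{(i)}$ with every $a_i>0$; in particular $\xi:=\sum_i\xi^{(i)}$ is a fixed strictly positive element of the kernel. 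I expect this positivity statement to be the main obstacle, as it is the only genuinely nontrivial input; everything else is linear algebra.

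The second key step translates the deficiency hypothesis into linear algebra. Writing $\omega_1,\dots,\omega_n$ for the standard basis of $\R^n$ and $\Delta=\mathrm{span}\{\omega_{j'}-\omega_j:\ z_j\to z_{j'}\in\Re\}$, one has $S=Y(\Delta)$ and $\dim\Delta=n-\ell$, so that $\delta=n-\ell-s=\dim(\ker Y\cap\Delta)$. Hence deficiency zero is equivalent to $\ker Y\cap\Delta=\{0\}$, and dualizing (using $(\ker Y)^\perp=\mathrm{Im}\,Y^\top$ and the fact that $\Delta^\perp$ is precisely the set of vectors that are constant on each linkage class) this is equivalent to $\mathrm{Im}\,Y^\top+\Delta^\perp=\R^n$.

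Finally I would combine the two steps. Setting $w=\log c$ (entrywise), the requirement $\Psi(c)\in\ker A_\kappa\cap\R^n_{>0}$ says that on each linkage class $i$ the restriction of $\Psi(c)$ is a positive multiple $a_i$ of $\xi^{(i)}$; taking logarithms this reads $(Y^\top w)_j=\log a_i+\log\xi_j$ for $j$ in class $i$, i.e.\ $Y^\top w-\log\xi\in\Delta^\perp$, where $\log\xi$ denotes the entrywise logarithm of $\xi$. Such a $w$ exists precisely when $\log\xi\in\mathrm{Im}\,Y^\top+\Delta^\perp$, which holds for \emph{every} right-hand side by the deficiency-zero conclusion $\mathrm{Im}\,Y^\top+\Delta^\perp=\R^n$. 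Setting $c=e^{w}>0$ then yields a complex balanced equilibrium, and since the rate constants $\{\kappa_k\}$ were arbitrary the proof is complete.
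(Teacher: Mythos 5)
The paper does not actually prove this statement: it is the classical Deficiency Zero Theorem, quoted with a citation to Feinberg (and to Gunawardena's notes), so there is no in-paper argument to compare against. Your proposal is, however, a correct and essentially complete rendition of the standard Feinberg--Horn proof. The reduction $\dot x = YA_\kappa\Psi(x)$ and the equivalence ``$c$ complex balanced $\iff A_\kappa\Psi(c)=0$'' check out directly against the paper's definition of complex balancing; the identification $\delta=\dim(\ker Y\cap\Delta)$ and its dual reformulation $\mathrm{Im}\,Y^\top+\Delta^\perp=\R^n$ are the standard linear-algebraic content of deficiency zero (using that $\Delta^\perp$ consists of vectors constant on linkage classes, which requires each class to be connected --- guaranteed here); and you correctly isolate the one nontrivial input, namely that weak reversibility (strong connectivity of each linkage class) forces $\ker A_\kappa$ to have a basis of nonnegative vectors $\xi^{(i)}$, strictly positive on their respective classes, via the Matrix--Tree/Markov-chain-tree theorem. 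The final step --- solving $Y^\top w-\log\xi\in\Delta^\perp$ for $w$ and setting $c=e^w$, with the class constants exponentiating to positive $a_i$ automatically --- closes the argument. If you were writing this out in full you would want to supply a proof (or precise citation) of the tree-theorem positivity claim, but as a proof outline nothing is missing or wrong.
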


\section{Previous Results}\label{sec:3}
\subsection{Product-form stationary distribution}
As mentioned in the previous section, complex balanced systems deserve special attention, and thus characterizations of complex balanced systems are of interest. The following Theorem in \cite{ACK2010} provides an explicit form for the stationary distribution of complex balanced systems.

\begin{theorem}\label{prod}
Let $\{\mathcal{S,R,C}\}$ be a reaction network. Suppose that when modeled deterministically with mass action kinetics and rate constants $\{\kappa_k\}$ the system is complex balanced with a complex balanced equilibrium $c\in \mathbb{R}^m_{\geq 0}$. Then the stochastically modeled system with intensities \eqref{intensity}, with the same rate constants $\{\kappa_k\}$, admits the stationary distribution 
\begin{equation}\label{prodform}
\pi(x)= \prod^m_{i=1} \frac{c_i^{x_i}}{x_i!}e^{-c_i}, \ \ \ \ \ x\in \mathbb{Z}^m_{\geq 0}.
\end{equation}
\end{theorem}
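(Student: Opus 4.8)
The plan is to verify directly that $\pi$ in \eqref{prodform} satisfies the stationarity equation for the forward equation, namely
\[
\sum_k \pi(x-y_k'+y_k)\lambda_k(x-y_k'+y_k)=\pi(x)\sum_k\lambda_k(x),
\]
and to observe that $\pi$, being a product of normalized Poisson distributions, is automatically summable (it sums to $1$ over $\Z^m_{\ge 0}$). The engine of the proof is a single algebraic identity tying together the mass-action intensity, the product-form measure, and the equilibrium $c$; once it is in hand, the stationarity equation collapses onto the complex balance condition.

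First I would establish the key identity
\[
\pi(x)\,\lambda_k(x) = \kappa_k\, c^{y_k}\,\pi(x-y_k), \qquad x\in\Z^m_{\ge 0},
\]
valid for every reaction $k$. This follows from a direct computation: substituting the factorial forms of $\pi$ and $\lambda_k$, the ratio $x_i!/(x_i-y_{ki})!$ cancels the $x_i!$ in the denominator of the $i$th Poisson factor, leaving $\prod_i e^{-c_i}\, c_i^{x_i}/(x_i-y_{ki})!$, which is exactly $c^{y_k}\pi(x-y_k)$. The indicator $1_{\{x_i\ge y_{ki}\}}$ is consistent with the convention that $\pi$ vanishes off $\Z^m_{\ge 0}$, so the identity holds at the boundary as well.

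Next I would apply this identity to both sides of the stationarity equation. Taking $z=x-y_k'+y_k$ in the identity gives $\pi(x-y_k'+y_k)\lambda_k(x-y_k'+y_k)=\kappa_k c^{y_k}\pi(x-y_k')$, while the identity applied directly gives $\pi(x)\lambda_k(x)=\kappa_k c^{y_k}\pi(x-y_k)$. Thus the stationarity equation reduces to
\[
\sum_k \kappa_k\, c^{y_k}\,\pi(x-y_k') = \sum_k \kappa_k\, c^{y_k}\,\pi(x-y_k).
\]

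The final step, which I expect to be the conceptual crux rather than a computational obstacle, is to regroup both sums by complexes and invoke complex balance. Collecting the terms in which a fixed complex $z\in\C$ appears as a product complex on the left, and as a source complex on the right, the displayed equation becomes $\sum_{z\in\C}\pi(x-z)\bigl(\sum_{k:y_k'=z}\kappa_k c^{y_k}\bigr)=\sum_{z\in\C}\pi(x-z)\bigl(\sum_{k:y_k=z}\kappa_k c^{y_k}\bigr)$. By the complex balance hypothesis the two inner sums agree for every $z$, so the two outer sums coincide term by term. This shows $\pi$ is a stationary measure, and summability upgrades it to the stated stationary distribution. The only genuinely delicate point is the bookkeeping in this regrouping, together with tracking the boundary where some $x-z$ leaves $\Z^m_{\ge 0}$ and the corresponding $\pi$ term vanishes; the complex balance identity is precisely what makes the sum balance complex-by-complex.
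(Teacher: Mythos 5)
Your proposal is correct, and it is essentially the proof of this result given in the cited source \cite{ACK2010} (the present paper only quotes the theorem as a previous result and does not reprove it): the key identity $\pi(x)\lambda_k(x)=\kappa_k c^{y_k}\pi(x-y_k)$, the reduction of the stationarity equation to $\sum_k \kappa_k c^{y_k}\pi(x-y_k')=\sum_k \kappa_k c^{y_k}\pi(x-y_k)$, and the regrouping by complexes to invoke complex balance are exactly the standard argument. The boundary bookkeeping you flag is handled correctly by the convention that $\pi$ vanishes off $\Z^m_{\ge 0}$, so no gap remains.
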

See also \cite{ACKK2017}, which shows that these systems are non-explosive, implying $\pi$ yields the limiting distributions of the process.

In the case when the stochastic model does not have mass action kinetics, \cite{ACK2010} also provides an extended result. In particular, \cite{ACK2010} considers generalized intensity functions as mentioned in several past papers \cite{Kelly1979, Whittle1986},
\begin{equation}\label{general intensity}
\lambda_k(x)=\kappa_k\prod_{i=1}^m \theta_i(x_i)\cdots\theta_i(x_i-y_{ki}+1),
\end{equation}
where $\kappa_k$ are positive rate constants, $\theta_i: \mathbb{Z} \to \mathbb{R}_{\geq 0}$, and $\theta_i(x)=0$ if $x\leq 0$. The functions $\theta_i$ should be thought of as the "rate of association" of the $i^{th}$ species \cite{Kelly1979}. For a system with intensity functions \eqref{general intensity}, the product form stationary distribution is quite similar to the one in Theorem \ref{prod}.

\begin{theorem}\label{general prod}
Let $\{\mathcal{S,R,C}\}$ be a reaction network. Suppose that when modeled deterministically with mass action kinetics and rate constants $\{\kappa_k\}$ the system is complex balanced with a complex balanced equilibrium $c\in \mathbb{R}^m_{\geq 0}$. 
Then the stochastically modeled system with general intensity functions \eqref{general intensity}, with the same rate constants $\{\kappa_k\}$, admits the stationary measure 
\begin{equation}\label{general prodform}
\pi(x)= \prod_{i=1}^m \frac{c_i^{x_i}}{\theta_i(1)\cdots\theta_i(x_i)}, \ \ \ \ \ x\in \mathbb{Z}^m_{\geq 0}.
\end{equation}
%If $\pi$ in \eqref{general prodform} is summable, it can be normalized to a stationary distribution.
\end{theorem}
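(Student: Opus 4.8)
The plan is to verify directly that the proposed measure $\pi$ in \eqref{general prodform} satisfies the constant-solution (master) equation
\begin{equation*}
\sum_k \pi(x-y_k'+y_k)\lambda_k(x-y_k'+y_k) = \pi(x)\sum_k \lambda_k(x),
\end{equation*}
adopting throughout the convention that $\pi(x)=0$ whenever $x\notin\mathbb{Z}^m_{\geq 0}$. This convention is consistent with $\theta_i(j)=0$ for $j\le 0$, which forces $\lambda_k(x)=0$ whenever reaction $k$ cannot fire from the state $x$.

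The heart of the argument is a single algebraic identity. Writing out $\pi(x)\lambda_k(x)$ and comparing the factors $\theta_i(x_i)\cdots\theta_i(x_i-y_{ki}+1)$ coming from $\lambda_k$ against the denominator $\theta_i(1)\cdots\theta_i(x_i)$ of $\pi$, all but the lowest $y_{ki}$ factors of the denominator cancel, leaving
\begin{equation*}
\pi(x)\lambda_k(x) = \kappa_k\, c^{y_k}\, \pi(x-y_k), \qquad x\in\mathbb{Z}^m_{\geq 0},
\end{equation*}
where $c^{y_k}=\prod_i c_i^{y_{ki}}$. First I would check this telescoping cancellation and confirm that it persists at the boundary: if $x_i<y_{ki}$ for some $i$, then the factor $\theta_i(x_i-y_{ki}+1)$ vanishes, so $\lambda_k(x)=0$, while simultaneously $x-y_k\notin\mathbb{Z}^m_{\geq 0}$ forces $\pi(x-y_k)=0$; hence both sides vanish and the identity holds unconditionally under the zero-extension convention. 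As a sanity check, specializing $\theta_i(x_i)=x_i$ gives $\lambda_k$ of the form \eqref{intensity} and $\pi$ of the form \eqref{prodform}, recovering the mass action case of Theorem \ref{prod}.

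Next I would substitute this identity into each side of the stationarity equation. On the right, $\pi(x)\lambda_k(x)=\kappa_k c^{y_k}\pi(x-y_k)$; on the left, applying the identity at the shifted argument $x-y_k'+y_k$ yields $\pi(x-y_k'+y_k)\lambda_k(x-y_k'+y_k)=\kappa_k c^{y_k}\pi(x-y_k')$. The equation thus reduces to
\begin{equation*}
\sum_k \kappa_k\, c^{y_k}\, \pi(x-y_k') = \sum_k \kappa_k\, c^{y_k}\, \pi(x-y_k).
\end{equation*}
I would then regroup each sum over complexes $z\in\mathcal{C}$, collecting on the left the reactions for which $z$ is the product complex and on the right those for which $z$ is the source complex, and factoring out the common value $\pi(x-z)$. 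The difference of the two sides becomes $\sum_{z\in\mathcal{C}}\pi(x-z)\bigl(\sum_{k:\,y_k'=z}\kappa_k c^{y_k}-\sum_{k:\,y_k=z}\kappa_k c^{y_k}\bigr)$, and the complex balance condition at $c$ makes each inner bracket vanish, completing the verification.

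I expect the main difficulty to be technical rather than conceptual, and to lie in the boundary bookkeeping: one must ensure the regrouping over complexes remains valid when $x-z$ leaves $\mathbb{Z}^m_{\geq 0}$, which is precisely where the zero-extension of $\pi$ and the vanishing of $\theta_i$ on nonpositive integers are invoked. A separate point, reflected in the statement by the phrase \emph{stationary measure}, is that $\pi$ need not be summable without growth hypotheses on the $\theta_i$, so normalizability is not part of this verification and is to be established elsewhere.
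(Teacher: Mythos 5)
Your verification is correct and is essentially the same argument the paper relies on: Theorem \ref{general prod} is quoted from \cite{ACK2010} without proof here, and the standard proof there is exactly your reduction $\pi(x)\lambda_k(x)=\kappa_k c^{y_k}\pi(x-y_k)$ followed by regrouping over complexes and invoking complex balance. The identical algebra appears, run in the reverse direction, in the paper's proof of Theorem \ref{main theorem 1}, and your boundary bookkeeping via the vanishing of $\theta_i$ on nonpositive integers is handled correctly.
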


In Section \ref{sec:4}, we will show that $\pi$ in \eqref{general prodform} is summable under some mild growth condition on $\theta_i$, and thus it can be normalized to a stationary distribution.

The result in Theorem \ref{prod} is a cornerstone for later research in both application and theory. It has been used in various studies in applied and computational biology such as tumor growth and invasion \cite{KSO2011}, phosphorylation systems \cite{CLWBNE2012}, studies of the chemical master equation \cite{KKNS2014}, etc. It is also the background for many theoretical developments in the field of chemical reaction networks \cite{ACGW2015, CW2016}.
Interestingly, it has been proven  that the converse is also true. %In particular, \cite{CW2016} provides a theorem for the converse of Theorem \ref{prod}.
\begin{theorem}[\cite{CW2016}]\label{eq:thm4}
Let $\{\mathcal{S,R,C}\}$ be a reaction network and consider the stochastically modeled system with rate constants $\{\kappa_k\}$ and mass action kinetics  \eqref{intensity}.  Suppose  that  for some $c \in \mathbb{R}^m_{\ge 0}$ the stationary distribution for the stochastic  model is \eqref{prodform}.  Then $c$ is a complex balanced equilibrium for the associated deterministic model with mass action kinetics and rate constants $\{\kappa_k\}$.
\end{theorem}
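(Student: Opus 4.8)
The plan is to prove this converse result, Theorem \ref{eq:thm4}, by exploiting the specific product-of-Poissons structure of the stationary distribution \eqref{prodform} and substituting it into the stationary equation (the chemical master equation at stationarity). The key observation driving the whole argument is that for the Poisson product form, the ratio $\pi(x - y_k'+ y_k)/\pi(x)$ simplifies beautifully. First I would write out the stationary equation
\begin{equation*}
\sum_k \pi(x-y_k'+y_k)\lambda_k(x-y_k'+y_k)=\pi(x)\sum_k\lambda_k(x),
\end{equation*}
and divide both sides by $\pi(x)$. Using \eqref{prodform}, one computes that $\pi(x-y_k'+y_k)/\pi(x) = c^{-(y_k'-y_k)}\cdot \prod_i \frac{x_i!}{(x_i - (y_{ki}'-y_{ki}))!}$ on the support where all arguments are nonnegative, and combining this with the mass action form \eqref{intensity} for $\lambda_k(x-y_k'+y_k)$ should collapse the product into a clean monomial expression in $x$.

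The central step is to carry out this algebra carefully and show that the stationary equation becomes, after simplification, an identity of the form
\begin{equation*}
\sum_k \kappa_k c^{y_k} x^{\downarrow y_k'} c^{-(y_k'-y_k)} = \sum_k \kappa_k c^{y_k}\, x^{\downarrow y_k}
\end{equation*}
up to appropriate indicator/support bookkeeping, where I am writing $x^{\downarrow y} = \prod_i x_i(x_i-1)\cdots(x_i - y_i+1)$ for the falling factorial. Here is the crux: mass action intensities $\lambda_k(x)$ are polynomials in $x$ (falling factorials), and the stationary equation becomes a polynomial identity in the variable $x \in \Z^m_{\ge 0}$. Since a polynomial identity holding on all of $\Z_{\ge 0}^m$ (or on a full-dimensional lattice region) forces equality of coefficients, I can group terms by their associated complex. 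The natural grouping is to collect, for each complex $z \in \mathcal{C}$, all reactions whose source or product equals $z$, and to match the monomials $x^{\downarrow z}$ (equivalently, the leading behavior indexed by $z$) on both sides.

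I expect the main obstacle to be precisely this grouping-and-matching step: turning the single scalar identity into the full family of complex balancing equations $\sum_{k:\,y_k'=z}\kappa_k c^{y_k} = \sum_{k:\,y_k=z}\kappa_k c^{y_k}$ for \emph{every} $z \in \mathcal{C}$. The difficulty is that different reactions can share the same falling-factorial monomial $x^{\downarrow y_k}$ only when their source complexes coincide, so one must argue that the falling factorials $\{x^{\downarrow z} : z \in \mathcal{C}\}$ are linearly independent as functions on $\Z^m_{\ge 0}$ (this follows since falling factorials form a basis dual to the standard monomial basis), and then match coefficients complex-by-complex. A secondary technical point requiring care is the support issue: the indicator functions $1_{\{x_i \ge y_{ki}\}}$ and the requirement $x - y_k' + y_k \ge 0$ mean the identity only holds on part of the lattice, so I would verify that for each fixed complex $z$ there are arbitrarily large $x$ (e.g.\ all coordinates large) at which all the relevant terms are active, which suffices to extract the coefficient identity. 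Once the per-complex coefficient matching is established, the definition of complex balanced equilibrium is satisfied directly and the proof concludes. An alternative, perhaps cleaner, route would be to reindex the left-hand sum by $w = x - y_k' + y_k$ and argue via the structure of the reaction graph, but I anticipate the coefficient-matching polynomial argument to be the most transparent.
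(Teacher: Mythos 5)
Your approach is essentially the paper's: the paper cites \cite{CW2016} for this statement, but its own proof of the generalization (Theorem \ref{main theorem 1}) is exactly your argument---substitute \eqref{prodform} and \eqref{intensity} into the stationarity equation, cancel, group terms by complex, and invoke linear independence of the functions indexed by $\mathcal{C}$ (falling factorials in the mass action case, where your leading-monomial argument is a clean substitute for the paper's growth-based induction, and your remark that it suffices to check the identity for all large $x$ handles the indicators). The only blemish is a stray factor of $c^{y_k}$ on both sides of your displayed intermediate identity---the correct simplification is $\sum_k \kappa_k c^{y_k-y_k'} x^{\downarrow y_k'} = \sum_k \kappa_k x^{\downarrow y_k}$---but the per-complex coefficient identity you extract is the right one.
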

%
% is  let $\{\kappa_k\}$ be a choice of rate constants and let assume $\mathbb{Z}^m_{\geq 0}$ is irreducible. Let $c\in \mathbb{R}^m_{\geq 0}$ and let \eqref{prodform} be the stationary distribution for the reaction network. Then c is a complex balanced equilibrium for $\{\mathcal{S,R,C}\}$.
%\end{theorem}

\subsection{Non-equilibrium potential and Lyapunov functions}
Another interesting result comes from the scaling behavior of the stationary distribution for complex balanced system. We first provide a key definition.
\begin{definition}
Let $\pi$ be a probability distribution on a countable set $\Gamma$ such that $\pi(x)>0$ for all $x\in\Gamma$. The \textit{non-equilibrium potential} of the distribution $\pi$ is the function $\phi_\pi: \Gamma \to \mathbb{R}$, defined by
\begin{equation*}
\phi_\pi(x)=-\ln(\pi(x)).
\end{equation*}
\end{definition}
In \cite{ACGW2015} it was shown that under an appropriate scaling, the limit of the non-equilibrium potential of the stationary distribution of a complex balanced system converges to a certain well-known Lyapunov function.
\begin{definition}
Let $E \subset \mathbb{R}^m_{\geq 0}$ be an open subset of $\mathbb{R}^m_{\geq 0}$ and let $f: \mathbb{R}^m_{\geq 0} \to \mathbb{R}$. A function $\mathcal{V}: E\to \mathbb{R}$ is called a \textit{Lyapunov function} for the system $\dot x = f(x)$ at $x_0 \in E$ if $x_0$ is an equilibrium point for $f$, that is $f(x_0)=0$, and 
\begin{enumerate}
\item $\mathcal{V}(x)>0$ for all $x\neq x_0, x\in E$ and $\mathcal{V}(x_0)=0$.
\item $\nabla\mathcal{V}(x)\cdot f(x)\leq 0$, for all $x\in E$, with equality if and only if $x=x_0$, where $\nabla\mathcal{V}$ denotes the gradient of $\mathcal{V}$.
\end{enumerate}
\end{definition}
In particular, the non-equilibrium potential of the stationary distribution in \eqref{prodform} converges to the usual Lyapunov function of Chemical Reaction Network Theory
\begin{equation}\label{Lyapunov}
\mathcal{V}(x)=\sum_i x_i(\ln(x_i)-\ln(c_i)-1)+c_i.
\end{equation}
The next section discusses the scaling  in which the convergence happens. It is called the classical scaling in the literature.

\subsubsection{The classical scaling}
\label{sec:classical}

Here we present a brief introduction to the classical scaling.  For more detailed discussions, see \cite{AK2011, AK2015, Kurtz78}.

Let $|y_k|=\sum_i y_{ki}$ and let $V$ be the volume of the system times Avogadro's number. Suppose $\{\kappa_k\}$ are the rate constants for the stochastic model. We defined the scaled rate constants as follows
\begin{equation}\label{scaled rate}
\kappa_k^V = \frac{\kappa_k}{V^{|y_k|-1}}
\end{equation}
and denote the scaled intensity function for the stochastic model by
\begin{equation}\label{scaled intensity}
\lambda_k^V(x)=\frac{\kappa_k}{V^{|y_k|-1}} \prod^m_{i=1}\frac{x_i!}{(x_i-y_{ki})!}.
\end{equation}
Note that if $x \in\Z^m_{\ge 0}$ gives the counts of the different species, then $\tilde x :=V^{-1} x$ gives the concentrations in moles per unit volume.  Then, by standard arguments
\begin{equation*}
\lambda_k^V(x) \approx V\kappa_k\prod^m_{i=1} \tilde x_i^{y_{ki}} = V\lambda_k(\tilde x)
\end{equation*}
where the final equality defines $\lambda_k$ and justifies the definition of deterministic mass action kinetics.

Denote the stochastic process determining the counts by $X^V(t)$, then normalizing the original process $X^V$ by V and defining $\bar X^V:=\ds\frac{X^V}{V}$ gives us
\begin{equation*}
\bar X^V(t) \approx \bar X^V(0) + \sum_k \frac{1}{V}Y_k(V\int_0^t\lambda_k(\bar X^V(s))ds)(y_k'-y_k),
\end{equation*}
where we are utilizing the representation \eqref{eq:RTC}.
Since the law of large numbers for the Poisson process implies $V^{-1}Y(Vu)\approx u$, we may conclude that a good approximation to the process $\bar X^V$ is the function $x=x(t)$ defined as the solution to the ODE
\begin{equation*}
\dot x = \sum_k \kappa_kx^{y_k}(y_k'-y_k),
\end{equation*}
which is exactly \eqref{det ODE}.

A corollary of Theorem \ref{prod} gives us the stationary distribution for the classically scaled system

\begin{theorem}\label{prod scaling}
Let $\{\mathcal{S,R,C}\}$ be a reaction network. Suppose that when modeled deterministically with mass action kinetics and rate constants $\{\kappa_k\}$ the system is complex balanced with a complex balanced equilibrium $c\in \mathbb{R}^m_{\geq 0}$.
For $V>0$, let $\{\kappa_k^V\}$ satisfy \eqref{scaled rate}. Then the stochastically modeled system on $\Z^d_{\ge 0}$ with rate constants $\{\kappa_k^V\}$ and intensity functions \eqref{scaled intensity} admits the stationary distribution
\begin{equation}\label{scaled SD1}
\pi^V(x)=  \prod_{i=1}^m \frac{(Vc_i)^{x_i}}{x_i!}e^{-Vc_i}, \ \ \ \ \ x \in \Z^m_{\ge 0}.
\end{equation}
\end{theorem}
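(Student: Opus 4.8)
The plan is to deduce this directly from Theorem \ref{prod} by checking that the scaled deterministic system inherits complex balancing with the equilibrium simply rescaled by $V$. First I would observe that the intensity functions \eqref{scaled intensity} are precisely the mass action intensities \eqref{intensity} with the rate constants $\{\kappa_k^V\}$ of \eqref{scaled rate} in place of $\{\kappa_k\}$, since $\lambda_k^V(x) = \kappa_k^V \prod_{i=1}^m \frac{x_i!}{(x_i-y_{ki})!}$. Hence Theorem \ref{prod} applies to the scaled stochastic model as soon as I exhibit a complex balanced equilibrium for the associated scaled deterministic mass action model.

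The candidate equilibrium is $Vc$, and the key point is that the rescaling of each rate constant exactly compensates the rescaling of the equilibrium. For each reaction $k$,
\[
\kappa_k^V (Vc)^{y_k} = \frac{\kappa_k}{V^{|y_k|-1}}\, V^{|y_k|} c^{y_k} = V \kappa_k c^{y_k},
\]
where I used $(Vc)^{y_k} = V^{|y_k|} c^{y_k}$ together with $|y_k| = \sum_i y_{ki}$. Summing this identity over the reactions entering and leaving a fixed complex $z$, the common factor $V$ pulls out, so the complex balance equations for $(\{\kappa_k\}, c)$ transfer verbatim to $(\{\kappa_k^V\}, Vc)$:
\[
\sum_{k:\, y_k'=z} \kappa_k^V (Vc)^{y_k} = V \sum_{k:\, y_k'=z} \kappa_k c^{y_k} = V \sum_{k:\, y_k=z} \kappa_k c^{y_k} = \sum_{k:\, y_k=z} \kappa_k^V (Vc)^{y_k}.
\]
Thus $Vc$ is a complex balanced equilibrium for the scaled deterministic model.

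Finally I would invoke Theorem \ref{prod} with rate constants $\{\kappa_k^V\}$ and complex balanced equilibrium $Vc$, which immediately yields the stationary distribution $\prod_{i=1}^m \frac{(Vc_i)^{x_i}}{x_i!} e^{-Vc_i}$, exactly \eqref{scaled SD1}. I do not anticipate a genuine obstacle here: the result is a corollary, and the only things to handle carefully are the bookkeeping of the exponent in $(Vc)^{y_k} = V^{|y_k|}c^{y_k}$ and the structural observation that complex balancing is a property of the pair (rate constants, equilibrium) that is preserved under this simultaneous scaling.
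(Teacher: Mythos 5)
Your proposal is correct and matches the paper's intent: the paper states this result as an immediate corollary of Theorem \ref{prod}, and the argument it implicitly relies on is exactly yours, namely that $\kappa_k^V (Vc)^{y_k} = V\kappa_k c^{y_k}$, so $Vc$ is a complex balanced equilibrium for the deterministic model with rate constants $\{\kappa_k^V\}$ and Theorem \ref{prod} applies directly. Your write-up simply makes explicit the bookkeeping the paper leaves to the reader.
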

An immediate implication of Theorem \ref{prod scaling} is that a stationary distribution for the scaled model $\bar X^V$ is
\begin{equation}\label{scaled SD2}
\tilde \pi^V(\tilde x^V)=\pi^V(V\tilde x^V), \ \ \ for \ \tilde x^V \in \frac{1}{V} \mathbb{Z}^d_{\geq 0}.
\end{equation}
\subsubsection{Convergence of the non-equilibrium potential}
The main finding in \cite{ACGW2015} is concerned with the scaling limit of the stationary distribution $\tilde \pi^V$ of \eqref{scaled SD2}.
\begin{theorem}\label{NEP}
Let $\{S, C, R\}$ be a  reaction network and let  $\{\kappa_k\}$ be a choice of rate constants.  Suppose that, modeled deterministically,  the system is complex balanced.  For  $V>0$, let $\{\kappa^V_k\}$ be related to $\{ \kappa_k\}$ via \eqref{scaled rate}. Fix a sequence of points  $\tilde x^V\in \frac1V \Z^d_{\ge 0}$  for which  $\lim_{V\to \infty}\tilde {x}^V = \tilde x \in \R^d_{>0}$.  Further let $c$ be the unique complex balanced equilibrium within the positive stoichiometric compatibility class of $\tilde x$. 
  
    Let $\pi^V$ be given by \eqref{scaled SD1} and let $\tilde \pi^V$ be as in \eqref{scaled SD2}, then
  \begin{equation*}
  	\lim_{V\to \infty}\left[- \frac{1}{V}\ln(\tilde \pi^V\!(\tilde x^V)) \right]= \V(\tilde x),
  \end{equation*}
  where $\V$ is the Lyapunov function for the ODE model satisfying \eqref{Lyapunov}. 
\end{theorem}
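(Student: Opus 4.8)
The plan is to prove this by a direct computation: substitute the explicit product formula \eqref{scaled SD1} into the non-equilibrium potential, take the logarithm, and analyze each factor asymptotically using Stirling's approximation. Writing $x^V := V\tilde x^V \in \mathbb{Z}^m_{\ge 0}$ so that each $x_i^V = V\tilde x_i^V$ is a non-negative integer, I would first use \eqref{scaled SD2} together with \eqref{scaled SD1} to obtain
\begin{equation*}
-\frac{1}{V}\ln\bigl(\tilde\pi^V(\tilde x^V)\bigr) = \sum_{i=1}^m\left[-\tilde x_i^V\ln(Vc_i) + \frac{1}{V}\ln\bigl((V\tilde x_i^V)!\bigr) + c_i\right].
\end{equation*}
The task then reduces to showing that each summand converges to the corresponding term $\tilde x_i(\ln\tilde x_i - \ln c_i - 1) + c_i$ of the Lyapunov function $\mathcal{V}(\tilde x)$ in \eqref{Lyapunov}.

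The key step is the asymptotic analysis of the factorial. Since $\tilde x$ lies in the positive orthant, each coordinate $\tilde x_i$ is strictly positive, so $\tilde x_i^V$ stays bounded away from $0$ for large $V$ and hence $V\tilde x_i^V \to \infty$. This is precisely the regime where Stirling's formula $\ln(n!) = n\ln n - n + O(\ln n)$ applies. Substituting $n = V\tilde x_i^V$ gives
\begin{equation*}
\frac{1}{V}\ln\bigl((V\tilde x_i^V)!\bigr) = \tilde x_i^V\ln(V\tilde x_i^V) - \tilde x_i^V + \frac{O\bigl(\ln(V\tilde x_i^V)\bigr)}{V},
\end{equation*}
where the error term vanishes as $V\to\infty$.

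Combining with the remaining terms, the potentially divergent contributions involving $\ln V$ cancel exactly: the difference $-\tilde x_i^V\ln(Vc_i) + \tilde x_i^V\ln(V\tilde x_i^V)$ collapses to $\tilde x_i^V\ln(\tilde x_i^V/c_i)$, eliminating all dependence on $V$ inside the logarithm. Thus each summand reduces to $\tilde x_i^V\ln(\tilde x_i^V/c_i) - \tilde x_i^V + c_i + o(1)$, and passing to the limit using $\tilde x_i^V \to \tilde x_i$ together with the continuity of $t \mapsto t\ln(t/c_i)$ on $(0,\infty)$ yields $\tilde x_i(\ln\tilde x_i - \ln c_i - 1) + c_i$. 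Summing over $i$ produces $\mathcal{V}(\tilde x)$, as desired.

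The main obstacle is not conceptual but a matter of care with the Stirling remainder: I must ensure the $O(\ln n)/V$ error is genuinely $o(1)$ along the sequence $\tilde x_i^V$. This is guaranteed by the strict positivity hypothesis $\tilde x \in \mathbb{R}^m_{>0}$, which forces $V\tilde x_i^V \to \infty$ and keeps $\ln(V\tilde x_i^V)/V \to 0$. The boundary behavior on $\partial\mathbb{R}^m_{\ge 0}$, where some $\tilde x_i = 0$ and the factorial term does not grow, is exactly why the theorem restricts attention to the open positive orthant.
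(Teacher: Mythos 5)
Your proof is correct and follows essentially the same route the paper takes for its generalization (Theorem \ref{main theorem 3}): substitute the product form, split off the normalizing factor, apply Stirling to the factorial, and observe that the $\ln V$ terms cancel before passing to the limit using $\tilde x \in \R^m_{>0}$. The only structural difference is that here the normalization $e^{-Vc_i}$ is explicit, so you avoid the asymptotic estimates for the constant $M$ (the paper's Lemmas \ref{lemma 3} and \ref{lemma 5}) that the non-mass-action case requires.
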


\section{Main results for networks with general kinetics}\label{sec:4}
In this section, we first show that for stochastically modeled reaction networks with non-mass action kinetics defined via \eqref{general intensity} whose associated mass action system is complex balanced, the stationary measure \eqref{general prodform} can be normalized to yield a stationary distribution.  We further show that these stochastic models are non-explosive.    We then extend Theorems \ref{eq:thm4} and  \ref{NEP} from Section \ref{sec:3} to the non-mass action case. 

\subsection{Existence of a stationary distribution and non-explosivity of non-mass action systems}
We begin with a theorem proving that the stochastic models considered  in Theorem \ref{general prod} are positive recurrent when only mild growth conditions are placed on the functions $\theta_i$.

\begin{theorem}\label{main theorem 2}
Let $\{S,C,R\}$ be a reaction network with rate constants $\{\kappa_k\}$. Suppose that when modeled deterministically, the associated mass action system is complex balanced with equilibrium $c\in \mathbb{R}^m_{>0}$.   
%Let $d,A\in \R^m_{>0}$ and $V>0$ and 
Suppose that $\theta_i$ and $\lambda_k$ satisfy the conditions in and around \eqref{general intensity}.  Moreover, suppose that for each $i$ we have $\lim_{x\to \infty} \theta_i(x) = \infty$.  Then,
%\begin{enumerate}
%\item  $\theta_i$ is a choice of  functions satisfying Assumption \ref{class},
%\item  $\{\kappa_k^V\}$ is related to  $\{\kappa_k\}$ as in \eqref{mod rate},
%\item the intensity functions for the stochastically modeled system are given by \eqref{mod intensity}, and
%\item  $\pi^V_*$ is the stationary measure \eqref{eq:modStationaryMeasure}  for the  process, which is guaranteed to exist by Theorem \ref{mod prodform}.
%\end{enumerate}
%Then
\begin{enumerate}
\item the  measure $\pi$ given in \eqref{general prodform} is summable over $\Z^m_{\ge 0}$, and a stationary distribution exists for the stochastically modeled process, and moreover
\item the stochastically modeled process is non-explosive.
\end{enumerate}
\end{theorem}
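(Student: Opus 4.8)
The plan is to prove the two parts separately, using the explicit product form of $\pi$ for summability and then invoking the foundational result from \cite{ACKK2017} (as stated in item (1) of the Introduction) together with summability to conclude non-explosivity.

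For part (1), summability, I would start from the product structure
\begin{equation*}
\pi(x) = \prod_{i=1}^m \frac{c_i^{x_i}}{\theta_i(1)\cdots\theta_i(x_i)}.
\end{equation*}
Because this factors across species, it suffices to show that each one-dimensional series $\sum_{n=0}^\infty \frac{c_i^{n}}{\theta_i(1)\cdots\theta_i(n)}$ converges, since the full sum over $\Z^m_{\ge 0}$ is then the finite product of these convergent series. Fix $i$ and write $a_n = c_i^{n}/\bigl(\theta_i(1)\cdots\theta_i(n)\bigr)$. The ratio test is the natural tool: $a_{n+1}/a_n = c_i/\theta_i(n+1)$. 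The hypothesis $\lim_{x\to\infty}\theta_i(x)=\infty$ gives that for all $n$ sufficiently large, $\theta_i(n+1) \ge 2c_i$, say, so that $a_{n+1}/a_n \le 1/2 < 1$ eventually; hence the series converges. One caveat to record is that $\theta_i$ must be strictly positive on the positive integers for $\pi$ to be well defined; this follows because $\theta_i(0)=0$ with $\theta_i\to\infty$ forces $\theta_i(n)>0$ for large $n$, and the standard convention is that the $\theta_i$ are positive on $\Z_{>0}$ so that the intensities \eqref{general intensity} are genuinely supported. Given summability, normalizing $\pi$ yields a stationary distribution, and in particular the process is positive recurrent on its (single, by irreducibility considerations) closed communicating class.

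For part (2), non-explosivity, the key observation is that summability of the stationary measure does the heavy lifting. The standard argument is that a continuous-time Markov chain possessing a \emph{summable} stationary measure (equivalently, a stationary distribution) cannot be explosive: an explosive chain spends only finite time before blowing up and cannot admit an invariant probability measure that assigns its full mass to the state space in a way consistent with the forward equation. More concretely, I would either (a) appeal directly to the non-explosivity criterion of \cite{ACKK2017}, checking that the required growth/Lyapunov condition on the intensities is met under $\theta_i\to\infty$ — the non-equilibrium potential $\phi_\pi(x)=-\ln\pi(x)=\sum_i\sum_{j=1}^{x_i}\ln\theta_i(j) - \sum_i x_i\ln c_i$ serves as a natural Foster--Lyapunov function whose drift can be controlled — or (b) combine the existence of a stationary distribution (part 1) with a general theorem that a positive recurrent, irreducible chain is non-explosive.

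The main obstacle I anticipate is \textbf{part (2)}, specifically verifying the precise hypotheses of the non-explosivity result in \cite{ACKK2017} rather than the summability estimate, which is routine. The subtlety is that non-explosivity and the existence of a stationary distribution are logically distinct: one must rule out the pathological scenario where the chain reaches infinity in finite time despite $\pi$ being summable. The cleanest route is likely to exhibit a Lyapunov function $W$ with $\sum_k \lambda_k(x)\bigl(W(x+y_k'-y_k)-W(x)\bigr) \le cW(x)$ for some constant $c$, using that the intensities grow at most like products of the $\theta_i$; then standard Markov-chain theory (e.g.\ Meyn--Tweedie-type criteria) forecloses explosion. I would need to confirm that $\theta_i\to\infty$ alone (without an upper growth bound) suffices for this drift inequality, which is the one point where the argument requires genuine care rather than bookkeeping.
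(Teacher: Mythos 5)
Your part (1) is correct and coincides with the paper's argument: factor the sum over $\Z^m_{\ge 0}$ into a product of one-dimensional series and apply the ratio test, using $c_i/\theta_i(x+1)\to 0$. (A small aside: your parenthetical that the state space is a single closed communicating class "by irreducibility" is not justified and is generally false for reaction networks, which typically decompose into many classes; fortunately nothing in the theorem requires irreducibility.)

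Part (2) contains a genuine gap. Your central claim --- that a continuous-time Markov chain admitting a summable stationary measure "cannot be explosive" --- is false as a general statement: there exist explosive chains whose $Q$-matrix admits a normalizable solution of $\pi Q=0$. This is precisely why the criterion of \cite{ACKK2017} is needed, and that criterion is not a Foster--Lyapunov drift condition but the integrability statement
\begin{equation*}
\sum_{x\in\Z^m_{\ge 0}} \pi(x)\sum_{k}\lambda_k(x) <\infty ,
\end{equation*}
i.e.\ the stationary measure must integrate the total jump intensity. The paper verifies this by a short computation you do not supply: for each reaction the cancellation
\begin{equation*}
\frac{\theta_i(x_i)\cdots\theta_i(x_i-y_{ki}+1)}{\theta_i(1)\cdots\theta_i(x_i)}=\frac{1}{\theta_i(1)\cdots\theta_i(x_i-y_{ki})}
\end{equation*}
reduces $\pi(x)\lambda_k(x)$, for $x$ large, to a constant multiple of the shifted term $\prod_i c_i^{x_i-s_i}/\bigl(\theta_i(1)\cdots\theta_i(x_i-s_i)\bigr)$ with $s_i=\max_k y_{ki}$, which is summable by part (1). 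Your alternative route (a drift inequality $\sum_k\lambda_k(x)(W(x+y_k'-y_k)-W(x))\le cW(x)$ with $W=\phi_\pi$) is not carried out and cannot be expected to work under the stated hypotheses: since no upper bound on the growth of $\theta_i$ is assumed, the intensities can grow arbitrarily fast relative to $W$, and the proposed linear drift bound can fail even when the process is non-explosive. You correctly flag this as the delicate point, but flagging it does not close it; the missing idea is to replace the drift condition by the $\pi$-integrability criterion and to exploit the telescoping structure of the non-mass-action intensities against the product-form measure.
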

\begin{proof} 
We first show that $\pi$ is summable over $\Z^m_{\ge 0}$.
We have
\begin{align*}
\sum_{x \in \Z_{\ge 0}^m}  \pi(x) = \sum_{x\in\Z_{\ge 0}^m}\prod_{i=1}^m \frac{c_i^{x_i}}{\theta_i(1)\cdots\theta_i(x_i)}=\prod_{i=1}^m (\sum_{x_i\in\Z_{\ge 0}} \frac{c_i^{x_i}}{\theta_i(1)\cdots\theta_i(x_i)})
\end{align*}
so long as each sum in the final expression is finite. Thus it is sufficient to prove that $\ds\sum_{x\in\Z_{\ge 0}} \frac{c_i^{x}}{\theta_i(1)\cdots\theta_i(x)}$ is finite for each   $i$. By the ratio test
\begin{equation*}
\lim_{x\to\infty} \frac{c_i^{x+1}}{\theta_i(1)\cdots\theta_i(x+1)} \cdot \left(\frac{c_i^{x}}{\theta_i(1)\cdots\theta_i(x)}\right)^{-1}=\lim_{x\to\infty}\frac{c_i}{\theta_i(x+1)}=0<1
\end{equation*}
where the last equality is due to the assumption that $\lim_{x\to \infty} \theta_i(x) = \infty$.  Hence the sum is convergent.
%By the ratio test, $\ds\sum_{x\in\Z_{\ge 0}} \frac{c_i^{x}}{\theta_i(1)\cdots\theta_i(x)}$ converges, thus $\pi_*^V$ is summable over $\Z^m_{\ge 0}$ and a stationary distribution exists. The normalizing constant is discussed further in Theorem \ref{main theorem 3}.

We turn to showing that the process is non-explosive. From \cite{ACKK2017}, to show that the process is non-explosive, it is sufficient to show
\begin{equation*}
\sum_{x\in\Z^m_{\ge 0}} \left( \pi(x)\sum_{k}\lambda_k(x)\right) <\infty.
\end{equation*}
From \eqref{mod intensity} and \eqref{mod SD1}, we need to show
\begin{equation*}
\sum_{x\in\Z^m_{\ge 0}} \left( \prod_{i=1}^m\frac{c_i^{x_i}}{\theta_i(1)\cdots\theta_i(x_i)}\sum_{k}\prod_{i=1}^m\theta_i(x_i)\cdots \theta_i(x_i-y_{ki}+1) \right)<\infty.
\end{equation*}
Let $s_i=\max_k \{y_{ki}\}$, where the max is over all source complexes,  and let  $R$ be the number of reactions. Let $n_i>s_i$ be such that $\theta_i(x_i)>1,\cdots,\theta_i(x_i-s_i+1)>1$ for all $x_i>n_i$. Then
\begin{align*}
\sum_{x\in\Z^m_{\ge 0};x_i>n_i}\prod_{i=1}^m&\frac{c_i^{x_i}}{\theta_i(1)\cdots\theta_i(x_i)}\sum_{k}\prod_{i=1}^m\theta_i(x_i)\cdots \theta_i(x_i-y_{ki}+1)\\
&<\sum_{x\in\Z^m_{\ge 0};x_i>n_i}\prod_{i=1}^m\frac{c_i^{x_i}}{\theta_i(1)\cdots\theta_i(x_i)}R\prod_{i=1}^m\theta_i(x_i)\cdots\theta_i(x_i-s_i+1)\\
&=\sum_{x\in\Z^m_{\ge 0};x_i>n_i}\prod_{i=1}^m\frac{Rc_i^{x_i}}{\theta_i(1)\cdots\theta_i(x_i-s_i)}\\
&<C\sum_{x\in\Z^m_{\ge 0};x_i>n_i}\prod_{i=1}^m\frac{c_i^{x_i-s_i}}{\theta_i(1)\cdots\theta_i(x_i-s_i)}<\infty
\end{align*}
where $C=R\max_{i=1}^m\{c_i^{s_i}\}$, and the last inequality follows from part 1. Thus the process is non-explosive.
\end{proof}

\subsection{Generalization of Theorem \ref{eq:thm4}}
We are set to provide the next theorem of the current paper, which is the converse statement of Theorem \ref{general prod} and generalizes Theorem \ref{eq:thm4} from \cite{CW2016}.   In the theorem below, we assume $\lim_{x\to \infty} \theta_i(x) = \infty$ for each $i$.  In Corollary \ref{cor:main}, we generalize the result to allow $\lim_{x\to \infty} \theta_i(x) \in \{0,\infty\}$ for each $i$.

%STATEMENT OF THEOREM	
\begin{theorem}\label{main theorem 1}
Let $\{\mathcal{S,R,C}\}$ be a reaction network and consider the stochastically modeled system with rate constants $\{\kappa_k\}$ and intensity functions \eqref{general intensity}.  Suppose  that $\lim_{x\to \infty} \theta_i(x) = \infty$  for each $i =1,\dots,m$ and that for some $c \in \mathbb{R}^m_{\ge 0}$ a stationary measure for the stochastic  model  satisfies \eqref{general prodform}.  Then $c$ is a complex balanced equilibrium for the associated deterministic model with mass action kinetics and rate constants $\{\kappa_k\}$.
\end{theorem}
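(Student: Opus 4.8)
The plan is to follow the structure of the Cappelletti--Wiuf proof of Theorem \ref{eq:thm4}, reducing the stationarity equation to a linear relation indexed by complexes, and then to replace the polynomial-independence step used in the mass-action case by an independence property of the generalized falling factorials $\Theta_i(x_i,n):=\theta_i(x_i)\theta_i(x_i-1)\cdots\theta_i(x_i-n+1)$ (with $\Theta_i(x_i,0)=1$), where the growth hypothesis $\lim_{x\to\infty}\theta_i(x)=\infty$ does the separating work. First I would record the algebraic identity that drives everything: cancelling the top $y_{ki}$ factors of the denominator of \eqref{general prodform} against \eqref{general intensity} yields, for every reaction $k$ and every $x\in\Z^m_{\ge0}$,
\[
\pi(x)\lambda_k(x)=\kappa_k\,c^{y_k}\,\pi(x-y_k),
\]
where $\pi$ is extended by $0$ outside $\Z^m_{\ge0}$ (if $x\not\ge y_k$ both sides vanish, since then $\lambda_k(x)=0$). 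Throughout I would assume $c\in\R^m_{>0}$ and $\theta_i(n)>0$ for $n\ge1$, so that $\pi(x)>0$ for all $x$; the boundary case $c\in\partial\R^m_{\ge0}$ requires a separate reduction to the subnetwork supported on the nonzero coordinates, which I would address afterward.

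Substituting the identity into both sides of the stationarity equation
\[
\sum_k \pi(x-y_k'+y_k)\,\lambda_k(x-y_k'+y_k)=\pi(x)\sum_k\lambda_k(x)
\]
turns the right-hand side into $\sum_k\kappa_k c^{y_k}\pi(x-y_k)$ and the left-hand side into $\sum_k\kappa_k c^{y_k}\pi(x-y_k')$ (apply the identity at the shifted argument $x-y_k'+y_k$). Regrouping the reactions by their product complex on the left and by their source complex on the right, I would arrive at
\[
\sum_{z\in\mathcal C}\pi(x-z)\,g(z)=0\qquad\text{for all }x\in\Z^m_{\ge0},
\]
where $g(z):=\sum_{k:\,y_k'=z}\kappa_k c^{y_k}-\sum_{k:\,y_k=z}\kappa_k c^{z}$ is exactly the net complex-balance defect at $z$. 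Thus it remains to show $g(z)=0$ for every $z\in\mathcal C$.

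Dividing by $\pi(x)>0$ and using $\pi(x-z)/\pi(x)=\prod_{i=1}^m \Theta_i(x_i,z_i)/c_i^{z_i}$, the relation becomes
\[
\sum_{z\in\mathcal C} h(z)\prod_{i=1}^m\Theta_i(x_i,z_i)=0,\qquad h(z):=g(z)/c^{z},
\]
valid for all $x$. The crux is a one-dimensional independence lemma: if $\sum_n b_n\Theta_i(x_i,n)=0$ for all large $x_i$, then every $b_n=0$. This follows by taking $N=\max\{n:b_n\ne0\}$, dividing by $\Theta_i(x_i,N)$, and letting $x_i\to\infty$; since $\Theta_i(x_i,n)/\Theta_i(x_i,N)=1/\big(\theta_i(x_i-n)\cdots\theta_i(x_i-N+1)\big)\to0$ for $n<N$ by the hypothesis $\theta_i\to\infty$, only the $n=N$ term survives, forcing $b_N=0$. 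I would then apply this lemma coordinate by coordinate: fixing $x_2,\dots,x_m$ and grouping the sum by the value of $z_1$ produces, for each fixed $n_1$, the relation $\sum_{z:\,z_1=n_1}h(z)\prod_{i\ge2}\Theta_i(x_i,z_i)=0$; iterating through the remaining coordinates peels off one index at a time until a single complex $z$ remains, giving $h(z)=0$, hence $g(z)=0$, for every $z\in\mathcal C$. This is precisely complex balance of $c$.

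The main obstacle is this independence lemma and its iteration. In the mass-action case $\theta_i(x_i)=x_i$ the functions $\prod_i\Theta_i(x_i,z_i)$ are genuine falling-factorial polynomials, whose linear independence is classical; here the $\Theta_i$ are arbitrary products and the only leverage is the strict growth ordering supplied by $\lim_{x\to\infty}\theta_i(x)=\infty$, which is exactly what guarantees that the higher-degree term dominates in the limit. I expect the bookkeeping of the coordinatewise peeling---checking that each intermediate coefficient is genuinely constant in the variable being sent to infinity, and that each $\Theta_i(x_i,N)$ is eventually nonzero---to be the only delicate point; everything else is formal. The extension to $\lim_{x\to\infty}\theta_i(x)\in\{0,\infty\}$ promised in Corollary \ref{cor:main} would then be obtained by a further reduction.
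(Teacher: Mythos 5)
Your proposal is correct and follows essentially the same route as the paper: after cancelling the stationary measure against the intensities and regrouping by complexes, both arguments reduce to the linear independence of the generalized falling factorials $\prod_i\theta_i(x_i)\cdots\theta_i(x_i-z_i+1)$, which the paper also establishes via a one-species dominance argument (using $\theta_i\to\infty$ so the highest-order term wins) followed by an induction on the number of species that is exactly your coordinatewise peeling, including the same check that the reduced complexes within each group remain distinct. The only cosmetic difference is your intermediate identity $\pi(x)\lambda_k(x)=\kappa_k c^{y_k}\pi(x-y_k)$ and your explicit care with $c\in\R^m_{>0}$, neither of which changes the substance.
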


%PROOF OF THEOREM	
\begin{proof}
By assumption, we have that $\pi$ satisfies 
	\begin{equation*}
	\sum_k \pi(x+y_k-y_k')\lambda_k(x+y_k-y_k')=\pi(x)\sum_k\lambda_k(x).
	\end{equation*}
Plugging \eqref{general intensity} and \eqref{general prodform} into this equation yields
	\begin{align*}
	\sum_k  \ds\frac{c^{x+y_k-y_k'}}{\prod_{i=1}^m[\theta_i(1)\cdots\theta_i(x_i+y_{ki}-y_{ki}')]}\kappa_k \prod_{i=1}^m \theta_i(x_i+y_{ki}-y_{ki}')\cdots\theta_i(x_i-y_{ki}'+1)\\=  \ds\frac{c^x}{\prod_{i=1}^m[\theta_i(1)\cdots\theta_i(x_i)]} \sum_{i=1}\kappa_k \prod_{i=1}^m \theta_i(x_i)\cdots\theta_i(x_i-y_{ki}+1).
	\end{align*}
Canceling and moving terms when necessary, we have
	\begin{align*}
	\sum_k c^{y_k-y_k'}\kappa_k\prod_{i=1}^m \theta_i(x_i)\cdots\theta_i(x_i-y_{ki}'+1)= 
	\sum_k \kappa_k\prod_{i=1}^m \theta_i(x_i)\cdots\theta_i(x_i-y_{ki}+1).
	\end{align*}	
Enumerating the reaction on the right by their product complexes, and the reactions on the left by their source complexes, the equation above becomes
\begin{align*}
	\sum_{z\in \mathcal{C}}\prod_{i=1}^m \theta_i(x_i)\cdots\theta_i(x_i-z_i+1) \sum_{k:y_k'=z}c^{y_k-y_k'}\kappa_k= 
	\sum_{z\in \mathcal{C}} \prod_{i=1}^m \theta_i(x_i)\cdots\theta_i(x_i-z_i+1)\sum_{k:y_k=z} \kappa_k.
	\end{align*}	
Since the above holds for all $x\in \mathbb{Z}^m_{\ge 0}$, the two sides are equal as functions. Hence, if the functions in the set 
\begin{equation}\label{theta}
\{\prod_{i=1}^m\theta_i(x_i)\cdots\theta_i(x_i-z_i+1)\}_{z\in \mathcal{C}}
\end{equation} 
are linearly independent, then we must have 
\begin{align*}
\sum_{k:y_k'=z}c^{y_k-y_k'}\kappa_k=\sum_{k:y_k=z} \kappa_k,
\end{align*}
which is the condition for the associated mass action system to be complex balanced.

%LEMMA: CASE WHEN M=1
Thus it remains to show that the functions in the set \eqref{theta}  are linearly independent.  We will prove that the functions are linearly independent by induction on the number of species, and begin with the base case $m=1$, which we provide as a lemma.
\begin{lemma}\label{lemma 1}
When $m=1$ (i.e.~the system has only one species) the functions in the set \eqref{theta} are linearly independent.
\end{lemma}
\begin{proof}[Proof of Lemma \ref{lemma 1}]
Let $\mathcal{C}=\{z_1, \dots,z_n\}$ ordered so that $z_i < z_{i+1}$ for each $i =1,\dots,n-1$.
Suppose, in order to find a contradiction, the  functions in the set \eqref{theta} are linearly dependent.  Then there exist $\alpha_1,\cdots,\alpha_r \in \R$ with $r\le n$ and $\alpha_r \ne 0$, such that
	\begin{align}\label{l1}
	\alpha_1\theta(x)\cdots\theta(x-z_1+1)+\cdots+\alpha_r\theta(x)\cdots\theta(x-z_r+1)=0, \quad \text{for all} \quad x \in \R.
	\end{align}
Let $M=\ds\frac{|\alpha_1|}{|\alpha_r|}+\cdots+\frac{|\alpha_{r-1}|}{|\alpha_r|}$.   Since $\theta(x)\to\infty$, as $x\to \infty$, we can find an $N>0$ such that $\forall x>N$, we have $\theta(x-z_r+1)>M$ and $\theta(x),\dots,\theta(x-z_r+1)\geq 1$. In this case,
	\begin{align*}
|\alpha_r\theta(x)\cdots\theta(x-z_r+1)|&>M |\alpha_r|\theta(x)\cdots\theta(x-z_r+2)\\
&= \left(\frac{|\alpha_1|}{|\alpha_r|}+\cdots+\frac{|\alpha_{r-1}|}{|\alpha_r|}\right)|\alpha_r|\theta(x)\cdots\theta(x-z_r+2)\\
&= |\alpha_1|\theta(x)\cdots\theta(x-z_r+2)+\cdots+|\alpha_{r-1}|\theta(x)\cdots\theta(x-z_r+2)\\
&\geq |\alpha_1|\theta(x)\cdots\theta(x-z_1+1)+\cdots+|\alpha_{r-1}|\theta(x)\cdots\theta(x-z_{r-1}+1).
	\end{align*}	
This contradicts \eqref{l1}. Therefore, \eqref{theta} must be linearly independent.
%\textcolor{red}{
%Alternate idea\\
%We check \eqref{l1} with some boundary values. Let $x=z_1$ in \eqref{l1}, we have
%\begin{align*}
%\alpha_1\theta(z_1)\cdots\theta(1) +\alpha_2\theta(z_1)\cdots\theta(z_1-z_2+1) + \cdots +\alpha_n\theta(z_n)\cdots\theta(z_1-z_n+1)=0
%\end{align*}
%Since $z_1<z_i$ for all $i>1$, $z_1-z_i+1\leq 0$. Thus
%\begin{align*}
%\alpha_2\theta(z_1)\cdots\theta(z_1-z_2+1) = \cdots =\alpha_n\theta(z_n)\cdots\theta(z_1-z_n+1)=0
%\end{align*}
%(If $\theta(0)\neq 0$ we can pick $x=z_1-1$). So we must have $\alpha_1\theta(z_1)\cdots\theta(1)=0$. Now if we add the assumption that $\theta(x)>0$ for $x>0$, then $\alpha_1=0$. Repeat the process by picking $x=z_2$, and so on, we should get $\alpha_i=0$, which shows the independence of \eqref{theta}.
%}
\end{proof}

%LEMMA: CASE WITH GENERAL M
We turn to the inductive step.  Thus, we now assume that functions of the form \eqref{theta} for distinct complexes $z$ are linearly independent when there are $m-1$ species.  We must show that this implies linear independence when there are  $m$ species.

Enumerate the complexes as $\mathcal{C}=\{z^1,z^2,\ldots,z^n\}$. Suppose that there are $\alpha_1, \dots, \alpha_n$ for which 
\begin{align}\label{l21}
	\alpha_1\prod_{i=1}^m\theta_i(x_i)\cdots\theta_i(x_i-z_i^1+1)+\ldots+\alpha_n\prod_{i=1}^m\theta_i(x_i)\cdots\theta_i(x_i-z_i^n+1)=0, \quad \text{for all} \quad x \in \R^m.
\end{align}
We will show that each $\alpha_i = 0$.

First note that we can not have $z^1_i=z^2_i=\ldots=z^n_i$ for each $i = 1, \dots,m$, for otherwise all the complexes are the same.  Thus, and without loss of generality, we assume that not all of the $z_1^k$  are equal.  In particular, we will assume that $z_1^1,\dots,z_1^n$ consists of $p$ distinct values with $2\le p \le n$.   We will also assume that the complexes are ordered so that the first $r_1$ terms of $z_1^k$ are the same, the second $r_2$ terms are the same, etc.  That is, 
\begin{equation}\label{eq:5789888}
	z_1^1 = \ldots = z_1^{r_1},  \quad z_1^{r_1 + 1} = \ldots=z_1^{r_2},  \dots  , z_1^{r_{p-1}+1} = \ldots = z_1^{n}.
\end{equation}

We now consider  the left hand side of \eqref{l21} as a function of $x_1$ alone.  For $j = 1,\dots,p$, we define 
\[
	f_j(x_1)=\theta_1(x_1)\cdots\theta_1(x_1-z_1^{r_j}+1).
\]
By Lemma \ref{lemma 1}, the functions $f_j,$ for $j=1,\dots,p$, are linearly independent. Combining similar terms in \eqref{l21} we have
\begin{align}
	&f_1(x_1)[\alpha_1\prod_{i=2}^m\theta_i(x_i)\cdots\theta_i(x_i-z_i^1+1)+\ldots+\alpha_{r_1}\prod_{i=2}^m\theta_i(x_i)\cdots\theta_i(x_i-z_i^{r_1}+1)]+\label{eq:firstbracketedterm}\\
	&f_2(x_1)[\alpha_{r_1+1}\prod_{i=2}^m\theta_i(x_i)\cdots\theta_i(x_i-z_i^{r_1+1}+1)+\ldots+\alpha_{r_2}\prod_{i=2}^m\theta_i(x_i)\cdots\theta_i(x_i-z_i^{r_2}+1)]+\notag\\
	&\vdots\notag\\
	&+f_{p}(x_1)[\alpha_{r_{p-1}+1}\prod_{i=2}^m\theta_i(x_i)\cdots\theta_i(x_i-z_i^{r_{p-1}+1}+1)+\ldots+\alpha_{n}\prod_{i=2}^m\theta_i(x_i)\cdots\theta_i(x_i-z_i^{n}+1)]=0.\notag
\end{align}
From the independence of the $f_j$, it must be the case that each bracketed term above is zero.  

Without loss of generality, we just consider the first bracketed term in \eqref{eq:firstbracketedterm}:
\begin{align}\label{eq:76868}
\alpha_{1}\prod_{i=2}^m\theta_i(x_i)\cdots\theta_i(x_i-z_i^{1}+1)+\ldots+\alpha_{r_1}\prod_{i=2}^m\theta_i(x_i)\cdots\theta_i(x_i-z_i^{r_1}+1),
\end{align}
which we know is equal to zero.    The goal now is to  apply our inductive hypothesis to conclude that each of $\alpha_1,\dots,\alpha_{r_1}$ is equal to zero.  

%However, each term in this sum is a function on $\R^{m-1}$ of the general form \eqref{theta}.  Hence, by our inductive hypothesis, we can conclude that each $\alpha$ in \eqref{eq:76868} is zero so long as each of the $z_j$\davenote{fix this}, for $j = 1,\dots,r_1$ are distinct.  In fact, they must be distinct because the coefficient of the first species is always the same for these complexes, and so if they were the same now, they were the same before, which can not be.  hence, they are zero, and a similar argument works for all $p$ terms.  \davenote{tung, fix this up}
%
For each of $k = 1,\dots,r_1$, we let $\tilde z^k = (z^k_2,\dots,z^k_m)$. Then each term in the sum \eqref{eq:76868} is a function on $\R^{m-1}$ of the general form \eqref{theta} with new complexes $\tilde z^k \in \R^{m-1}$. To use the inductive hypothesis, we  must argue  that the $\tilde z^k$ are distinct.    Consider, for example, the first two terms: $\tilde z^1$ and $\tilde z^2$.  By \eqref{eq:5789888}, we know that $z^1_1=z^2_1$; that is, the coefficient of species 1 for the two complexes are the same.  If we also had  $\tilde z^1 = \tilde z^2$, then all the coefficients of the species would be the same for the two complexes, contradicting the fact that they are distinct complexes (i.e.~$z^1 \ne z^2$).  Hence, it must be that $\tilde z^1 \ne \tilde z^2$.
Thus, by the inductive hypothesis, all the terms of the sum \eqref{eq:76868} are linearly independent, and $\alpha_1=\cdots=\alpha_{r_1}=0$. Repeating this argument for the other bracketed terms completes the proof.

We have proven the independence of \eqref{theta} in all cases, which completes the proof of the theorem.
\end{proof}

%COROLLARY: CASE THETA -> 0

The following relaxes the condition in Theorem \ref{main theorem 1} that the limit of the functions $\theta_i$ must be infinity.

\begin{corollary} \label{cor:main}
Let $\{\mathcal{S,R,C}\}$ be a reaction network and consider the stochastically modeled system with rate constants $\{\kappa_k\}$ and intensity functions \eqref{general intensity}.  Suppose  that $\lim_{x\to \infty} \theta_i(x) \in\{0, \infty\}$  for each $i =1,\dots,m$ and that for some $c \in \mathbb{R}^m_{\ge 0}$ a stationary measure for the stochastic  model  satisfies \eqref{general prodform}.  Suppose further that $\theta_i(x)>0$ for $x$ large enough.  Then $c$ is a complex balanced equilibrium for the associated deterministic model with mass action kinetics and rate constants $\{\kappa_k\}$.
\end{corollary}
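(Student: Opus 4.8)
The plan is to reduce the corollary to Theorem \ref{main theorem 1} by observing that the hypothesis $\lim_{x\to\infty}\theta_i(x)=\infty$ is used in that proof in exactly one place, namely the base case Lemma \ref{lemma 1}. Every other step --- the manipulation of the stationary equation into an identity among the functions \eqref{theta}, and the inductive reduction from $m$ to $m-1$ species --- is purely algebraic and never refers to the limiting behavior of the $\theta_i$. So it suffices to establish a single-species linear-independence statement that also covers the case $\lim_{x\to\infty}\theta(x)=0$, and then to rerun the induction of Theorem \ref{main theorem 1} verbatim, invoking the generalized base case in place of Lemma \ref{lemma 1}.

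For the new base case I would take $m=1$, with $\theta(x)>0$ for all large $x$ and $\lim_{x\to\infty}\theta(x)=0$. With the complexes ordered $z_1<\cdots<z_n$ and a nontrivial relation $\sum_k \alpha_k\,\theta(x)\cdots\theta(x-z_k+1)=0$ valid for all large $x$, I would let $j$ be the smallest index with $\alpha_j\neq 0$, factor out the minimal-degree product $\theta(x)\cdots\theta(x-z_j+1)$ (strictly positive for large $x$), and divide by it. What remains is $\alpha_j+\sum_{k>j}\alpha_k\,\theta(x-z_j)\cdots\theta(x-z_k+1)=0$. Each surviving product has at least one factor and every factor tends to $0$, so letting $x\to\infty$ forces $\alpha_j=0$, a contradiction. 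Thus when $\theta\to0$ the \emph{smallest} complex plays the role that the largest complex plays in Lemma \ref{lemma 1}, where $\theta\to\infty$ makes the top-degree term dominate.

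With both one-species cases in hand, the inductive step is unchanged: peeling off species $1$ uses the generalized base case (whichever of the two limits $\theta_1$ happens to have), and the remaining bracketed sums are functions of $m-1$ variables of the form \eqref{theta} to which the inductive hypothesis applies. A mixture of species with $\theta_i\to0$ and $\theta_i\to\infty$ therefore needs no separate treatment, since the induction removes one species at a time and the base case is agnostic to the direction of the limit. Once linear independence of \eqref{theta} is re-established, the complex-balance conclusion follows exactly as in Theorem \ref{main theorem 1}.

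I expect the point requiring the most care to be the bookkeeping around positivity and the domain on which linear independence is asserted. Because some $\theta_i$ vanish near $0$, the cancellations turning the stationary equation into $\sum_{z}(A_z-B_z)\prod_i\theta_i(x_i)\cdots\theta_i(x_i-z_i+1)=0$ --- where $A_z$ and $B_z$ denote the two sides of the complex-balance condition --- involve dividing by products of $\theta_i$ values and are only legitimate where all relevant $\theta_i$ are nonzero. This is precisely what the added hypothesis $\theta_i(x)>0$ for $x$ large guarantees, and it is why I would phrase both the base case and the resulting identity on the tail $\{x : x_i>N_i \text{ for all } i\}$ rather than on all of $\Z^m_{\ge0}$. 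Linear independence on such a tail is all that the coefficient-matching argument requires, and the $x\to\infty$ limits used above live there naturally.
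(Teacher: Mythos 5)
Your proof is correct, but it handles the $\theta_i\to 0$ species differently from the paper. The paper's proof keeps the induction of Theorem \ref{main theorem 1} untouched and instead performs a single algebraic transformation on the dependence relation: for each $i\le\ell$ with $\theta_i\to 0$ it sets $\phi_i=1/\theta_i$ (so $\phi_i\to\infty$), and then clears denominators by multiplying through by $\prod_{i\le\ell}\phi_i(x_i)\cdots\phi_i(x_i-w_i)$ with $w_i=\max_j z_i^j$, so that every term again becomes a product of functions tending to infinity and the argument of Theorem \ref{main theorem 1} applies verbatim (modulo the fact that the resulting $\phi$-products are shifted, starting at $x_i-z_i^j$ rather than at $x_i$). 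You instead observe that the limit hypothesis enters the induction only through the one-species base case, and you supply a second base-case lemma for $\theta\to 0$: divide out the minimal-degree product and let $x\to\infty$, so the \emph{smallest} complex dominates where in Lemma \ref{lemma 1} the largest one does. Both routes are sound. The paper's transformation is more economical (one trick disposes of all species at once, and mixed limits never need to be mentioned), while your version makes the logical structure more transparent --- it isolates exactly where the hypothesis on $\lim\theta_i$ is used, avoids the slightly delicate bookkeeping of the shifted $\phi$-products, and correctly flags that, since some $\theta_i$ may vanish or the identity may only be meaningful on a tail, the linear independence should be asserted on $\{x: x_i>N_i\ \forall i\}$, which is all the coefficient-matching requires.
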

\begin{proof}
Without loss of generality, assume $\lim_{x\to \infty} \theta_i(x)= 0$ for $i\le  \ell$ and $\lim_{x\to \infty} \theta_i(x) = \infty$ for $i \ge \ell + 1.$  The proof is the same as that of Theorem \ref{main theorem 1} in that we must prove the linear independence of the functions in \eqref{theta}. Let 
\begin{align}\label{cor 11}
	\alpha_1\prod_{i=1}^m\theta_i(x_i)\cdots\theta_i(x_i-z_i^1+1)+\ldots+\alpha_n\prod_{i=1}^m\theta_i(x_i)\cdots\theta_i(x_i-z_i^n+1)=0.
\end{align}
For $x$ large enough that $\theta_i(x) >0$,  let $\phi_i(x)=\ds\frac{1}{\theta_i(x)}$ for each  $i \le \ell$.  Then  we have $\lim_{x\to \infty} \phi_i(x) = \infty$. Now \eqref{cor 11} becomes
\begin{align}\label{cor 12}
	\frac{\alpha_1\prod_{i=\ell+1}^m\theta_i(x_i)\cdots\theta_i(x_i-z_i^1+1)}{\prod_{i=1}^\ell \phi_i(x_i)\cdots\phi_i(x_i-z_i^1+1)}+\ldots+\frac{\alpha_n\prod_{i=\ell+1}^m\theta_i(x_i)\cdots\theta_i(x_i-z_i^n+1)}{\prod_{i=1}^\ell \phi_i(x_i)\cdots\phi_i(x_i-z_i^n+1)}=0.
\end{align}
Let $w_k=\max_{1\le j \le n}\{z_k^j\}$. Then from \eqref{cor 12} we have
\begin{align*}
	&\alpha_1\prod_{i=1}^\ell\phi_i(x_i-z_i^1)\cdots\phi_i(x_i-w_i)\prod_{i=\ell+1}^m\theta_i(x_i)\cdots\theta_i(x_i-z_i^1+1)+\\
	&\ldots+\alpha_n\prod_{i=1}^\ell\phi_i(x_i-z_i^n)\cdots\phi_i(x_i-w_i)\prod_{i=\ell+1}^m\theta_i(x_i)\cdots\theta_i(x_i-z_i^n+1)=0.
\end{align*}
This is similar to the set-up of Theorem \ref{main theorem 1} (since each $\phi_i(x) \to \infty$, as $x \to \infty$) and we can conclude $\alpha_1=\ldots=\alpha_n=0$ and complete the proof.
\end{proof}

\subsection{Generalization of Theorem \ref{NEP}}
This section is concerned  with the convergence of the non-equilibrium potential of the stationary distribution of systems with general kinetics, under some appropriate scaling.  In particular, we would like to have a similar result as Theorem \ref{NEP} for the case of general kinetics.  One difficulty that arises is that the classical scaling is not, in general, appropriate for our purposes. This is illustrated by the example below. %In particular, if $\theta(x) \ne x$, but $\tilde \pi^V$ in \eqref{scaled SD2} is still the stationary distribution for the classically  scaled process then
% \begin{equation*}
%  	\lim_{V\to \infty}\left[- \frac{1}{V}\ln(\tilde \pi^V\!(\tilde x^V)) \right]= \V(\tilde x),
%  \end{equation*}

\begin{example}
Consider the reaction network with one species A and reactions given by
\begin{equation*}
\emptyset \rightleftarrows A
\end{equation*}
with the intensity function given by \eqref{general intensity}, where the rate constants are $\kappa_{\emptyset\to A}=\kappa_{A\to\emptyset}=1$ and $\theta(x)=x^2$. Consider the process under the classical scaling. We obtain a stationary distribution for the scaled model $\tilde X^V$ from Theorem \ref{prod scaling} and \eqref{scaled SD2}:
\begin{equation*}
\tilde\pi^V(\tilde x^V)=\pi^V(V\tilde x^V)=\frac{1}{M}\frac{(Vc)^{V\tilde x^V}}{\theta(1)\cdots\theta(V\tilde x^V)}=\frac{1}{M}\frac{(Vc)^{V\tilde x^V}}{((V\tilde x^V)!)^2},  \qquad \tilde x^V \in \frac1V \Z_{\ge 0}.
\end{equation*}
We consider the limiting behavior of the non-equilibrium potential $\ds-\frac{1}{V}\ln(\tilde\pi^V(\tilde x^V))$, as $V\to \infty$.  Using Stirling's approximation,
\begin{align*}
-\frac{1}{V}\ln(\tilde \pi^V(\tilde x^V))&=-\frac{1}{V}\ln\left(\frac{1}{M}\frac{(Vc)^{V\tilde x^V}}{((V\tilde x^V)!)^2}\right)\\
&=-\frac{1}{V}(-\ln M+V\tilde x^V\ln V+V\tilde x^V\ln c - 2\ln ((V\tilde x^V)!))\\
&\approx -\frac{1}{V}(-\ln M+V\tilde x^V\ln V+ V\tilde x^V\ln c -2V\tilde x^V\ln V\tilde x^V+2 V\tilde x^V)\\
&=-\frac{1}{V}(-\ln M+V\tilde x^V\ln c - 2V\tilde x^V\ln \tilde x^V -V\tilde x^V\ln V +2 V\tilde x^V).
\end{align*}
We need to estimate $M$ when $V\to \infty$. From Lemma \ref{lemma 3} in the Appendix, we have
\begin{align*}
\ln M&=\ln \sum_{x\in \mathbb{Z}_{\ge 0}}\frac{(Vc)^x}{(x!)^2}\\
&\approx 2(Vc)^{1/2} + a\ln (Vc) + b,
\end{align*}
for some constants $a,b \in\R$.
Thus
\begin{align*}
-\frac{1}{V}\ln(\tilde \pi^V(\tilde x^V))&\approx -\frac{1}{V}(-2(Vc)^{1/2}-a\ln (Vc)-b +V\tilde x^V\ln c-2V\tilde x^V\ln \tilde x^V-V\tilde x^V\ln V+2 V\tilde x^V)\\
&=2\frac{c^{1/2}}{V^{1/2}}+\frac{a\ln (Vc)}{V}+\frac{b}{V}-\tilde x^V\ln c +2\tilde x^V\ln \tilde x^V+\tilde x^V\ln V-  2\tilde x^V.
\end{align*}
Clearly, $\ds\lim_{V\to \infty}-\frac{1}{V}\ln(\tilde\pi^V(\tilde x^V))=\infty$, and we do not have  convergence of the non-equilibrium potential under the classical scaling. \hfill $\square$
\end{example}

With the above example in mind, we provide an alternative scaling.
\subsubsection{The modified scaling}
Define $|y_k|=\sum_i y_{ki}$ and let $V$ be a scaling parameter.  For each reaction $y_k\to y_k'$ let $\kappa_k$ be a positive parameter.  We now define
the rate constant for $y_k\to y_k'$ as 
\begin{equation}\label{mod rate}
\kappa_k^V = \frac{\kappa_k}{V^{d\cdot y_k-1}}
\end{equation}
where the parameter $d$ is a vector to be chosen (they will depend upon the limiting values $\lim_{x\to \infty} \theta_i(x)$). Note that the classical scaling is the case when $d =(1,1,\ldots,1)$. Then we define the scaled intensity function
\begin{equation}\label{mod intensity}
\lambda_k^V(x)=\frac{\kappa_k}{V^{d\cdot y_k-1}}\prod_{i=1}^m\theta_i(x_i)\cdots\theta_i(x_i-y_{ki}+1),
\end{equation}
where, as usual,  $\theta_i: \mathbb{Z} \to \mathbb{R}_{\geq 0}$, and $\theta_i(x)=0$ if $x\leq 0$.
%Then we have the corresponding scaled stationary distribution as follows.
\begin{theorem}\label{mod prodform}
Let $\{S,C,R\}$ be a reaction network with rate constants $\{\kappa_k\}$. Suppose that when modeled deterministically, the associated mass action system is complex balance with equilibrium $c\in \mathbb{R}^m_{>0}$. For some V, let $\{\kappa_k^V\}$ be related to the $\{\kappa_k\}$ via \eqref{mod rate}. Then the stochastically modeled system with scaled intensity function \eqref{mod intensity} has  stationary measure
\begin{equation}\label{eq:modStationaryMeasure}
\pi_{*}^V(x) =  \ds\prod_{i=1}^m \frac{(V^{d_i}c_i)^{x_i}}{\theta_i(1)\cdots\theta_i(x_i)}, \quad \text{ where } \quad x \in \Z^m_{\ge 0}.
\end{equation}
If \eqref{eq:modStationaryMeasure} is summable, then a normalizing constant $M$ can be found so that 
\begin{equation}\label{mod SD1}
\pi^V(x)= \frac1M\ds\prod_{i=1}^m \frac{(V^{d_i}c_i)^{x_i}}{\theta_i(1)\cdots\theta_i(x_i)}, \quad \text{ where } \quad x \in \Z^m_{\ge 0},
\end{equation}
is a stationary distribution.
\end{theorem}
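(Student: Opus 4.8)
The plan is to recognize that the modified scaling is nothing more than a reparametrization of the rate constants that keeps us inside the hypotheses of Theorem~\ref{general prod}. Indeed, the scaled intensity functions \eqref{mod intensity} are precisely of the general form \eqref{general intensity}, but with the original rate constants $\kappa_k$ replaced by the scaled constants $\kappa_k^V = \kappa_k/V^{d\cdot y_k - 1}$. Thus, if I can exhibit an explicit complex balanced equilibrium for the deterministic mass action system with rate constants $\{\kappa_k^V\}$, then Theorem~\ref{general prod} will immediately deliver the claimed stationary measure \eqref{eq:modStationaryMeasure}.

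First I would introduce the candidate equilibrium $c^V \in \R^m_{>0}$ defined componentwise by $c^V_i = V^{d_i} c_i$ (which lies in $\R^m_{>0}$ since $c \in \R^m_{>0}$ and $V>0$), and verify that it is complex balanced for the deterministic mass action system with rate constants $\{\kappa_k^V\}$. The heart of the matter is the single observation that, for every reaction $k$,
\[
\kappa_k^V (c^V)^{y_k} = \frac{\kappa_k}{V^{d\cdot y_k - 1}} \prod_{i=1}^m (V^{d_i} c_i)^{y_{ki}} = \frac{\kappa_k}{V^{d\cdot y_k - 1}} V^{d\cdot y_k} c^{y_k} = V\kappa_k c^{y_k}.
\]
The essential point is that the power of $V$ produced here is the same (namely $V^1$) for every reaction, which is exactly what the $-1$ in the exponent of \eqref{mod rate} is engineered to achieve. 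Summing this identity over the reactions with a fixed product complex, and separately over the reactions with a fixed source complex, shows that the complex balance equations for $\{\kappa_k^V\}$ at $c^V$ are precisely $V$ times the complex balance equations for $\{\kappa_k\}$ at $c$. Since the latter hold by hypothesis, so do the former, and hence $c^V$ is a complex balanced equilibrium of the scaled deterministic system.

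With this in hand, Theorem~\ref{general prod} applies verbatim to the stochastic model with rate constants $\{\kappa_k^V\}$ and intensity functions \eqref{mod intensity}, yielding the stationary measure $\prod_{i=1}^m (c^V_i)^{x_i}/(\theta_i(1)\cdots\theta_i(x_i))$, which is exactly \eqref{eq:modStationaryMeasure}. For the final assertion, I would invoke the homogeneity of the stationary measure equation: if $\pi_*^V$ is summable with total mass $M = \sum_{x\in\Z^m_{\ge 0}} \pi_*^V(x) < \infty$, then dividing by $M$ preserves the defining balance relations and produces the stationary probability distribution \eqref{mod SD1}. I do not expect a serious obstacle here; the only step demanding genuine care is the bookkeeping of the exponents of $V$ in the displayed identity, since it is precisely the uniform cancellation down to a single factor of $V$ that allows the complex balance structure to be inherited from the unscaled system.
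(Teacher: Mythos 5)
Your proposal is correct, and it takes a different (and more modular) route than the paper. The paper's own proof is a one-line deferral: it says the argument is the same as the direct verification of Theorem \ref{prod} in \cite{ACK2010}, i.e.\ one plugs the candidate measure into the stationary master equation and checks that the powers of $V$ introduced by \eqref{mod rate} cancel. You instead observe that the scaled model is literally an instance of Theorem \ref{general prod} with rate constants $\{\kappa_k^V\}$, and that the vector $c^V = V^d c$ is a complex balanced equilibrium for the deterministic mass action system with those constants; the key identity $\kappa_k^V (c^V)^{y_k} = V\kappa_k c^{y_k}$, uniform in $k$, shows the complex balance equations for $\{\kappa_k^V\}$ at $c^V$ are $V$ times those for $\{\kappa_k\}$ at $c$ (and, summing over complexes, that $c^V$ is indeed an equilibrium). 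Theorem \ref{general prod} then hands you \eqref{eq:modStationaryMeasure} with no further computation, and normalization by the total mass $M$ gives \eqref{mod SD1} by homogeneity of the balance relations. What your approach buys is a genuinely self-contained proof from results already stated in the paper, turning the theorem into a corollary rather than a re-derivation; what the paper's approach buys is nothing beyond brevity of exposition, since the ``care with the $V$ terms'' it alludes to is exactly the cancellation your displayed identity makes explicit. The only point worth making explicit in a write-up is that complex balance of $c^V$ implies $c^V$ is an equilibrium of the scaled ODE (summing the balance relations over all complexes), so that the hypothesis of Theorem \ref{general prod} is met in full.
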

\begin{proof}
The proof is similar to that of Theorem \ref{prod}, as found in \cite{ACK2010}, except care must be taken to ensure that the terms associated with the scaling parameter $V$ cancel appropriately.  
\end{proof}

Let $X^V$ be the process associated with the intensities \eqref{mod intensity} and let $\tilde X^V = V^{-1} X^V$ be the scaled process.  By Theorem \ref{mod prodform}, we have that for  $x^V \in \frac1V \Z^m_{\ge 0}$
\begin{equation}\label{measure11111}
\tilde \pi_*^V(\tilde x^V) = \pi_*^V (V\tilde x^V), 
\end{equation}
is a stationary measure for the scaled process.  If \eqref{measure11111} is summable, which is ensured by Theorem \ref{main theorem 2} so long as $\theta_i(x) \to \infty$ as $x\to \infty$, then
\begin{equation}\label{mod SD2}
\tilde \pi^V(\tilde x^V) = \pi^V (V\tilde x^V), 
\end{equation}
is a stationary distribution.
In the next section, we consider the the limiting behavior of $-\frac1V \ln(\tilde \pi^V(\tilde x^V))$ as $V \to \infty$ for a class of $\theta_i$.

\subsubsection{Limiting behavior of $-\frac1V \ln(\tilde \pi^V(\tilde x^V))$}

We make the following assumption on the functions $\theta_i$.
\begin{assumption}\label{class}
We assume that (i) $\theta_i: \mathbb{Z} \to \mathbb{R}_{\geq 0}$, (ii)  $\theta_i(x)=0$ if $x\leq 0$, and (iii) there exists $d,A\in \mathbb{R}^m_{>0}$  such that $\lim_{x_i\to \infty} \ds\frac{\theta_i(x_i)}{x_i^{d_i}}=A_i$ for each $i$. 
\end{assumption}
Roughly speaking, this class of functions act like power functions when $x$ is large. We will utilize functions satisfying Assumption \ref{class} to build intensity functions as in \eqref{mod intensity}.
We will  show that if the deterministic mass action system is complex balanced, then  the limiting behavior of the scaled non-equilibrium potential of the stochastically modeled system with intensities \eqref{mod intensity}  is a Lyapunov function for   the ODE system
\begin{align}\label{eq:56789876}
	\dot x = \sum_k \kappa_k (Ax^d)^{y_k}(y_k' - y_k), \quad \text{ for } \quad x \in \R^m_{\ge 0}.
\end{align}
where we recall that $Ax^d$ is the vector with $i$th component $A_ix_i^{d_i}$. This result therefore  generalizes Theorem \ref{NEP} (which is Theorem 8 in \cite{ACGW2015}).

\begin{lemma}
Let $\{S,C,R\}$ be a reaction network with rate constants $\{\kappa_k\}$. Suppose that when modeled deterministically, the associated mass action system is complex balanced with equilibrium $c\in \mathbb{R}^m_{>0}$.   
Let $d,A\in \R^m_{>0}$.   Then the system \eqref{eq:56789876} is complex balanced with equilibrium vector $\tilde c$ satisfying
\[
	\tilde c_i = \left(\frac{c_i}{A_i}\right)^{1/d_i}.
\]
\begin{proof}
The proof consists of verifying that for each $z\in \C$,
\[
	\sum_{k: y_k =z} \kappa_k (A \tilde c^d)^{y_k} = \sum_{k: y_k' =z} \kappa_k (A \tilde c^d)^{y_k},
\]
where the sum on the left consists of those reactions with source complex $z$ and the sum on the right consists of those with product complex $z$.  This is immediate from the definition of $\tilde c$.
\end{proof}
\end{lemma}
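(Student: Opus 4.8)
The plan is to verify the complex-balancing condition for \eqref{eq:56789876} directly at the proposed point $\tilde c$, reducing it to the complex balance already assumed for the associated mass-action system at $c$. Reading off \eqref{eq:56789876}, the rate function attached to reaction $k$ is $f_k(x) = \kappa_k (Ax^d)^{y_k}$, so $\tilde c$ is a complex balanced equilibrium for this system precisely when, for every complex $z \in \C$,
\[
	\sum_{k:\, y_k = z} \kappa_k (A\tilde c^d)^{y_k} = \sum_{k:\, y_k' = z} \kappa_k (A\tilde c^d)^{y_k}.
\]
This is the identity the proof must establish.

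The key step is to evaluate the monomial $A\tilde c^d$ at the candidate equilibrium. Using $\tilde c_i = (c_i/A_i)^{1/d_i}$, the $i$th component is $A_i \tilde c_i^{d_i} = A_i (c_i/A_i) = c_i$; that is, $A\tilde c^d = c$ exactly. Since $c, A, d \in \R^m_{>0}$, the vector $\tilde c$ is a well-defined positive point and all monomials are evaluated at strictly positive arguments. Because $A\tilde c^d = c$, we have $(A\tilde c^d)^{y_k} = c^{y_k}$ for every $k$, and so the displayed balancing condition collapses term-by-term to
\[
	\sum_{k:\, y_k = z} \kappa_k c^{y_k} = \sum_{k:\, y_k' = z} \kappa_k c^{y_k},
\]
which is exactly the complex-balancing condition for the mass-action system at $c$. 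This holds for each $z \in \C$ by hypothesis, giving the claim. (That $\tilde c$ is genuinely an equilibrium of \eqref{eq:56789876}, as required by the definition of a complex balanced equilibrium, follows from the same substitution: evaluating \eqref{eq:56789876} at $\tilde c$ yields $\sum_k \kappa_k c^{y_k}(y_k'-y_k)$, the mass-action vector field \eqref{det ODE} at $c$, which vanishes because $c$ is an equilibrium.)

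There is essentially no obstacle beyond this bookkeeping; the entire content is the observation that the change of variables $x \mapsto Ax^d$ carries $\tilde c$ to $c$ and thereby transports the polynomial identity encoding complex balance verbatim. The only point needing care is confirming that the positivity of $c$, $A$, and $d$ makes $\tilde c_i = (c_i/A_i)^{1/d_i}$ a legitimate positive equilibrium, which I would state explicitly so that the reduction to the already-known balancing at $c$ is unconditional.
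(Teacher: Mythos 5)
Your proposal is correct and follows exactly the paper's argument: the paper's proof verifies the same balancing condition and declares it ``immediate from the definition of $\tilde c$,'' which is precisely your computation $A_i\tilde c_i^{d_i} = c_i$ reducing everything to complex balance at $c$. You have simply made explicit the one-line substitution the paper leaves to the reader.
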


%
%
%
%
%Let $d,A\in \R^m_{>0}$ and let $\theta_i$ be a choice of  functions satisfying Assumption \ref{class}.  For $V>0$ and the $d>0$ already selected, let $\{\kappa_k^V\}$ be related to  $\{\kappa_k\}$ as in \eqref{mod rate}. Let the intensity functions for the stochastically modeled system be \eqref{mod intensity}. 
%%Further let c be the unique complex balanced equilibrium within the positive stoichiometric compatibility class of $\tilde x$.
%
%Let $\tilde \pi^V_*$ be the stationary measure for the scaled process guaranteed to exist by Theorem \ref{mod prodform} and given by \eqref{measure11111}. Then
%\begin{enumerate}
%\item $\tilde \pi^V_*$ is summable over $\Z^m_{\ge 0}$, and a stationary distribution exists.
%\item For each  $V$ the process with intensity functions \eqref{mod intensity} is non-explosive.
%\end{enumerate}
%\end{theorem}

We now turn to the scaled models, and prove that the properly scaled non-equilibrium potential converges to a Lyapunov function for the ODE system \eqref{eq:56789876}.

\begin{theorem}\label{main theorem 3}
Let $\{S,C,R\}$ be a reaction network with rate constants $\{\kappa_k\}$. Suppose that when modeled deterministically, the associated mass action system is complex balanced with equilibrium $c\in \mathbb{R}^m_{>0}$.   

Fix $d,A\in \R^m_{>0}$ and let $\theta_i$ be a choice of  functions satisfying Assumption \ref{class}.  For $V>0$ and the $d>0$ already selected, let $\{\kappa_k^V\}$ be related to  $\{\kappa_k\}$ as in \eqref{mod rate} and let the intensity functions for the stochastically modeled system be \eqref{mod intensity}. 
%Further let c be the unique complex balanced equilibrium within the positive stoichiometric compatibility class of $\tilde x$.
Let $\tilde \pi^V$ be the stationary distribution for the scaled process guaranteed to exist by Theorems \ref{mod prodform} and \ref{main theorem 2} and given by \eqref{mod SD1}. 

Fix a sequence of points $\tilde x^V \in \tfrac1V \Z^m_{\ge 0}$ for which $\lim_{V\to \infty} \tilde x^V =\tilde x \in \Z^m_{>0}$. 
Then 
\begin{equation}\label{eq:09878908}
\lim_{V \to \infty} [-\frac{1}{V} \ln(\tilde \pi^V(\tilde x^V))]=\mathcal{V}(\tilde x) = \sum_{i=1}^m[ \tilde x_i(d_i\ln (\tilde x_i)-\ln(c_i)-d_i+\ln(A_i))]+ \sum_{i=1}^m d_i(c_i/A_i)^{1/d_i},
\end{equation}
where $\mathcal V$ is defined by the final equality, and moreover 
 $\mathcal V$ is  a Lyapunov function for the ODE system \eqref{eq:56789876}.
\end{theorem}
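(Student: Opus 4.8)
The plan is to split Theorem \ref{main theorem 3} into its two assertions: first, the evaluation of $\lim_{V\to\infty}[-\frac1V\ln(\tilde\pi^V(\tilde x^V))]$, and second, the verification that the resulting $\V$ is a Lyapunov function for \eqref{eq:56789876}. For the limit I would exploit the product structure of \eqref{mod SD1}. Writing the normalizing constant as $M=\prod_{i=1}^m M_i$ with $M_i=\sum_{n\ge 0}(V^{d_i}c_i)^n/(\theta_i(1)\cdots\theta_i(n))$, the quantity $-\frac1V\ln(\tilde\pi^V(\tilde x^V))$ decomposes into a sum over species of
\[
\frac1V\ln M_i-\tilde x_i^V(d_i\ln V+\ln c_i)+\frac1V\ln\big(\theta_i(1)\cdots\theta_i(V\tilde x_i^V)\big),
\]
so the whole problem reduces to the asymptotics of two one-dimensional sums, exactly as in the worked Example for $d=2$.

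For the $\theta$-product, Assumption \ref{class} gives $\ln\theta_i(j)=\ln(A_i j^{d_i})+\varepsilon_{i,j}$ with $\varepsilon_{i,j}\to 0$; summing to $n=V\tilde x_i^V$, using that the Ces\`aro averages of $\varepsilon_{i,j}$ vanish, and applying Stirling's formula $\ln(n!)=n\ln n-n+O(\ln n)$, I obtain $\frac1V\ln(\theta_i(1)\cdots\theta_i(V\tilde x_i^V))\to \tilde x_i\ln A_i+d_i\tilde x_i\ln V+d_i\tilde x_i\ln\tilde x_i-d_i\tilde x_i$, keeping the $\ln V$ term explicit. For $M_i$ I would invoke Lemma \ref{lemma 3}, whose content is the saddle-point asymptotic $\ln M_i=d_i(V^{d_i}c_i/A_i)^{1/d_i}+o(V)=d_iV(c_i/A_i)^{1/d_i}+o(V)$ (the saddle sits at $n^\ast=(V^{d_i}c_i/A_i)^{1/d_i}$, where the maximal exponent evaluates to $d_i(V^{d_i}c_i/A_i)^{1/d_i}$). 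Substituting both estimates, the divergent $\pm d_i\tilde x_i^V\ln V$ contributions cancel exactly, and letting $V\to\infty$ leaves precisely $\V(\tilde x)$ as in \eqref{eq:09878908}.

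For the Lyapunov claim I would first rewrite $\V$ using the equilibrium $\tilde c_i=(c_i/A_i)^{1/d_i}$ of the preceding Lemma. Since $\ln c_i=\ln A_i+d_i\ln\tilde c_i$, equation \eqref{eq:09878908} collapses to $\V(x)=\sum_{i=1}^m d_i\,[\,x_i(\ln(x_i/\tilde c_i)-1)+\tilde c_i\,]$. Each summand $a(\ln(a/b)-1)+b$ is nonnegative and vanishes only at $a=b$, and $d_i>0$, so condition (1) follows immediately with $\V(\tilde c)=0$. For condition (2) I compute $\partial_{x_i}\V=d_i\ln(x_i/\tilde c_i)$ and set $u=Ax^d$, so that $\ln(u_i/c_i)=d_i\ln(x_i/\tilde c_i)$. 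This change of variables identifies
\[
\nabla\V(x)\cdot f(x)=\sum_k\kappa_k u^{y_k}\sum_i \ln(u_i/c_i)(y_{ki}'-y_{ki}),
\]
which is exactly the classical time-derivative of the Horn--Jackson Lyapunov function \eqref{Lyapunov} for the mass-action field $\sum_k\kappa_k u^{y_k}(y_k'-y_k)$ at the complex balanced equilibrium $c$. The standard complex-balanced estimate (via $\ln t\le t-1$ together with the complex-balance relations at $c$, as in \cite{HJ1972}) then gives $\nabla\V\cdot f\le 0$, with equality iff $\ln(Ax^d/c)\perp S$; relative to a fixed stoichiometric compatibility class this forces $x=\tilde c$, matching the convention of \cite{ACGW2015}. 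That $\tilde c$ is an equilibrium of \eqref{eq:56789876} is the preceding Lemma, since $A\tilde c^d=c$.

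I expect the main obstacle to be the normalizing-constant asymptotics: controlling $\frac1V\ln M_i$ requires the full strength of Lemma \ref{lemma 3}, because for general $\theta_i$ satisfying only Assumption \ref{class} the products $\theta_i(1)\cdots\theta_i(n)$ need not be comparable to $A_i^n(n!)^{d_i}$ up to a convergent factor --- one controls the logarithm only to $o(n)$, which is nonetheless enough at the $\frac1V$-scaled leading order. A secondary care point is the equality case in condition (2), where the honest statement is orthogonality of $\ln(Ax^d/c)$ to the stoichiometric subspace rather than literal uniqueness of the zero of $\nabla\V\cdot f$ on all of $\R^m_{>0}$.
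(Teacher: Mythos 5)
Your proposal is correct and follows essentially the same route as the paper: the same three-way decomposition (normalizing constant via Lemma \ref{lemma 3} after comparing $\theta_i(1)\cdots\theta_i(n)$ with $A_i^n(n!)^{d_i}$, Stirling for the factorial piece, cancellation of the $\ln V$ terms), and the same complex-balance computation for the Lyapunov property. Your Ces\`aro treatment of $\ln\theta_i(j)-\ln(A_ij^{d_i})$ is in fact a slightly cleaner version of the paper's step \eqref{eq:55555} (which asserts convergence of the product ratio to a constant $C$, more than Assumption \ref{class} strictly gives, though harmless at the $\tfrac1V\ln$ scale), and your caveat about the equality case in condition (2) correctly identifies a point the paper's Lemma \ref{lemma 4} also leaves implicit.
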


Note that by taking $d=(1,..,1)$ and $A=(1,..,1)$, the limit of the $\theta_i$ in Assumption \ref{class} is simply mass action kinetics.  Hence, the main result in \cite{ACGW2015} is contained within the above theorem.

%\begin{equation}
%\lim_{V \to \infty} [-\frac{1}{V} \ln(\tilde \pi_*^V(\tilde x^V))]=\mathcal{V_*}(\tilde x) =\sum_{i=1}^m[ \tilde x_i(d_i\ln (\tilde x_i)-\ln(c_i)-d_i+\ln(A_i))],
%\end{equation}
%where the final equality defines $\mathcal V_*$.  Moreover, $\mathcal V_*$ is a Lyapunov function for the 
%ODE system \eqref{eq:56789876}.
%
%If \eqref{measure11111} is summable and the  stationary distribution is given by \eqref{mod SD2}, then
%\begin{equation}
%\lim_{V \to \infty} [-\frac{1}{V} \ln(\tilde \pi^V(\tilde x^V))]=\mathcal{V}(\tilde x) = \mathcal V_*(\tilde x)+ \sum_{i=1}^m d_i(c_i/A_i)^{1/d_i},
%\end{equation}
%where $\mathcal V$ is defined by the final equality and $\mathcal V$ is  a Lyapunov function for the 
%ODE system \eqref{eq:56789876}.
%\end{theorem}

\begin{proof}
Using \eqref{mod SD1}  and \eqref{mod SD2} we have 
\begin{align*}
-\frac{1}{V} \ln(\tilde \pi^V(\tilde x^V)) &= -\frac{1}{V} \ln\bigg(\frac{1}{M} \prod_{i=1}^m \frac{(V^{d_i}c_i)^{V\tilde x_i^V}}{\theta_i(1)\cdots\theta_i(V\tilde x_i^V)}\bigg)\\
&=-\frac{1}{V}\bigg( -\ln M + \sum_{i=1}^m V\tilde x_i^V \ln(V^{d_i} c_i) - \sum_{i=1}^m  \ln(\theta_i(1)\cdots\theta_i(V\tilde x_i^V))\bigg)\\
&=-\frac{1}{V}\bigg(-\ln M + \sum_{i=1}^m Vd_ix_i^V \ln(V)+ \sum_{i=1}^m V\tilde x_i^V \ln(c_i)-\sum_{i=1}^m  \ln(\theta_i(1)\cdots\theta_i(V\tilde x_i^V))\bigg)\\
&=-\frac{1}{V}\bigg(-\ln M + \sum_{i=1}^m Vd_i\tilde x_i^V \ln(V)+ \sum_{i=1}^mV\tilde x_i^V \ln(c_i)-\sum_{i=1}^m \ln((V\tilde x_i^V !)^{d_i})\\
&\hspace{.2in} +\sum_{i=1}^m \ln((V\tilde x_i^V !)^{d_i})-\sum_{i=1}^m\ln(\theta_i(1)\cdots\theta_i(V\tilde x_i^V))\bigg)\\
\end{align*}
We analyze the limiting behavior of the different pieces of the last expression.

\begin{enumerate}
\item We begin with the  first term
\begin{align*}
\frac{1}{V}\ln M=\frac{1}{V}\ln\bigg(\sum_{x\in \Z^m}  \frac{(V^dc)^{x}}{\prod_{i=1}^m\theta_i(1)\cdots\theta_i(x_i)}\bigg),
\end{align*}
where $M$ is defined using \eqref{mod SD1}.
In  Lemma \ref{lemma 5} in the appendix we show that as $V\to\infty$
\begin{align}
\frac{1}{V}\ln\bigg(\sum_{x\in \Z^m}  \frac{(V^dc)^{x}}{\prod_{i=1}^m\theta_i(1)\cdots\theta_i(x_i)}\bigg) &\sim \frac{1}{V}\ln\bigg(\sum_{x\in \Z^m}  \frac{(V^dc)^{x}}{\prod_{i=1}^m A_i^{x_i}(x_i!)^{d_i}}\bigg)\notag\\
&=\frac{1}{V}\ln\bigg(\sum_{x\in \Z^m}  \frac{(V^dcA^{-1})^{x}}{\prod_{i=1}^m (x_i!)^{d_i}}\bigg),
\label{eq:567898889}
\end{align}
where by $a_V\sim b_V$, as $V\to \infty$, we mean $\lim_{V\to \infty} (a_V - b_V) = 0$.
We may then apply Lemma \ref{lemma 3} to \eqref{eq:567898889} to conclude there are constants $a,b$ such that
\begin{align*}
\frac{1}{V}\ln\bigg(\sum_{x\in \Z^m}  \frac{(V^dcA^{-1})^{x}}{\prod_{i=1}^m (x_i!)^{d_i}}\bigg)\sim \frac{1}{V}\sum_{i=1}^m( d_i(V^{d_i}c_iA_i^{-1})^{1/d_i} + a\ln(V^{d_i}c_iA_i^{-1})+b).
\end{align*}
Taking the limit $V\to \infty$, we see that only the first term remains, which yields
\begin{align*}
\lim_{V \to \infty} \frac{1}{V}\ln M = \sum_{i=1}^m d_i(c_i/A_i)^{1/d_i}.
\end{align*}

\item We use Stirling's approximation with the middle terms
\begin{align*}
-\frac{1}{V}\bigg(&\sum_{i=1}^m (Vd_i\tilde x_i^V \ln(V)+ V\tilde x_i^V \ln(c_i))-\sum_{i=1}^m \ln((V\tilde x_i^V !)^{d_i})\bigg)\\
&\sim -\frac{1}{V}\bigg(\sum_{i=1}^m V d_i\tilde x_i^V \ln(V)+ V\tilde x_i^V \ln(c_i)-\sum_{i=1}^md_i((V\tilde x_i^V)\ln(V\tilde x_i^V)-V\tilde x_i^V)\bigg)\\
&=-\frac{1}{V}\bigg(\sum_{i=1}^m V\tilde x_i^V \ln(c_i)-\sum_{i=1}^m d_i(V\tilde x_i^V)\ln(\tilde x_i^V)+\sum_{i=1}^md_i V\tilde x_i^V\bigg)\\
&=\sum_{i=1}^m d_i\tilde x_i^V\ln(\tilde x_i^V) -\sum_{i=1}^m \tilde x_i^V\ln(c_i) - \sum_{i=1}^m d_i\tilde x_i^V. 
\end{align*}
Taking the limit $V \to \infty$, and noting that $\tilde x^V \to \tilde x$, we have 
\begin{align*}
\lim_{V \to \infty}-\frac{1}{V}\bigg(&\sum_{i=1}^m (Vd_i\tilde x_i^V \ln(V)+ V\tilde x_i^V \ln(c_i))-\sum_{i=1}^m \ln((V\tilde x_i^V !)^{d_i})\bigg)\\
&=\sum_{i=1}^m d_i\tilde x_i\ln(\tilde x_i) -\sum_{i=1}^m \tilde x_i\ln(c_i) -  \sum_{i=1}^m d_i\tilde x_i.
\end{align*}
\item We turn to the final term.  By using an argument similar to  \eqref{eq:567898889}, there is a constant $C>0$ for which
\begin{align}
-\sum_{i=1}^m\frac{1}{V}[\ln((V\tilde x_i^V !)^{d_i})-\ln(\theta_i(1)\cdots\theta_i(V\tilde x_i^V))]& = -\frac{1}{V}\ln \bigg(\frac{(V\tilde x^V !)^d}{\prod_{i=1}^m\theta_i(1)\cdots\theta_i(V\tilde x_i^V)}\bigg)\notag\\
&\sim -\frac{1}{V} \ln\bigg( \frac{C}{(A)^{V\tilde x^V}}\bigg)\label{eq:55555}\\
&=-\frac{1}{V}\big(\ln C - \sum_{i=1}^mV\tilde x_i^V\ln(A_i)\big).\notag
\end{align}
 Taking the limit $V \to \infty$, and noting that $\tilde x^V \to \tilde x$, we have 
\begin{align*}
\lim_{V \to \infty} -\sum_{i=1}^m\frac{1}{V}[\ln((V\tilde x_i^V !)^{d_i})-\ln(\theta_i(1)\cdots\theta_i(V\tilde x_i^V))] = \sum_{i=1}^m\tilde x_i \ln(A_i).
\end{align*}
\end{enumerate}
Combining the three parts, we conclude \eqref{eq:09878908} holds. 
The fact that the limit is a Lyapunov function is proven in  Lemma \ref{lemma 4} below.
\end{proof}

\begin{lemma}\label{lemma 4}
The function given by \eqref{eq:09878908},
\begin{equation*}
\V(x)= \sum_{i=1}^m[ x_i(d_i\ln (x_i)-\ln(c_i)-d_i+\ln(A_i))+d_i(c_i/A_i)^{1/d_i}], \quad x \in \Z^m_{\ge 0},
\end{equation*}
is a Lyapunov function for  the system \eqref{eq:56789876}.% if the limit in assumption \ref{class} is equality.
\end{lemma}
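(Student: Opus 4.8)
The plan is to verify the two defining properties of a Lyapunov function directly, reducing the second (and harder) one to the classical complex-balanced dissipation estimate via the substitution $u = Ax^d$, i.e.\ the vector with $i$th entry $u_i = A_ix_i^{d_i}$. Throughout I take the equilibrium to be the $\tilde c$ of the preceding lemma, $\tilde c_i = (c_i/A_i)^{1/d_i}$, and the domain $E$ to be the positive stoichiometric compatibility class containing $\tilde c$. The key observation driving everything is that under $u=Ax^d$ the equilibrium $\tilde c$ maps to $A\tilde c^d = c$, the complex balanced equilibrium of the underlying mass action system.

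First I would dispatch property (1). Writing $\mathcal V(x) = \sum_{i=1}^m \psi_i(x_i)$ with $\psi_i(t) = t(d_i\ln t - \ln c_i - d_i + \ln A_i) + d_i(c_i/A_i)^{1/d_i}$, a direct computation gives $\psi_i'(t) = d_i\ln t + \ln A_i - \ln c_i$ and $\psi_i''(t) = d_i/t > 0$ on $(0,\infty)$. Hence each $\psi_i$ is strictly convex with a unique critical point at $t = \tilde c_i$, where one checks $\psi_i(\tilde c_i) = 0$. Thus $\mathcal V \ge 0$ with equality only at $x=\tilde c$, which is property (1).

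The core of the argument is property (2). The same derivative computation gives $\partial \mathcal V/\partial x_i = \ln(A_i x_i^{d_i}/c_i) = \ln(u_i/c_i)$. Substituting this together with $f_i(x) = \sum_k \kappa_k (Ax^d)^{y_k}(y_{ki}' - y_{ki}) = \sum_k \kappa_k u^{y_k}(y_{ki}'-y_{ki})$ and collecting terms by reaction, I would obtain
\[
\nabla \mathcal V(x)\cdot f(x) = \sum_k \kappa_k u^{y_k}\,\ln\frac{(u/c)^{y_k'}}{(u/c)^{y_k}}.
\]
This is exactly the dissipation functional in the classical Horn--Jackson argument for the mass-action system complex balanced at $c$. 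Setting $a_k = \kappa_k c^{y_k} > 0$ and $w = u/c$, the elementary inequality $\ln s \le s - 1$ yields $w^{y_k}\ln\frac{w^{y_k'}}{w^{y_k}} \le w^{y_k'} - w^{y_k}$, so $\nabla \mathcal V\cdot f \le \sum_k a_k(w^{y_k'} - w^{y_k})$; regrouping this bound by complexes $z$ and applying the complex-balance identity $\sum_{k:y_k'=z} a_k = \sum_{k:y_k=z} a_k$ makes it telescope to $0$. Hence $\nabla \mathcal V\cdot f \le 0$.

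It remains to pin down the equality case, which I expect to be the only genuinely delicate step. Equality in $\ln s \le s-1$ forces $s=1$, so $\nabla \mathcal V\cdot f = 0$ exactly when $(u/c)^{y_k'} = (u/c)^{y_k}$ for every reaction, i.e.\ when $\ln(Ax^d/c) \perp (y_k'-y_k)$ for all $k$, equivalently $\ln(Ax^d/c)\in S^\perp$. By the Horn--Jackson characterization recalled earlier (\cite{HJ1972}), this is precisely the set of complex-balanced equilibria of \eqref{eq:56789876}, and each positive stoichiometric compatibility class contains exactly one such point; restricting $E$ to the class of $\tilde c$ therefore forces the equality set to be $\{\tilde c\}$, giving ``$\le 0$ with equality iff $x=\tilde c$''. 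Thus the whole proof is really the single observation that $u=Ax^d$ converts the scaled dissipation into the classical one, and the main obstacle is bookkeeping the equality/uniqueness on one compatibility class exactly as in \cite{ACGW2015,HJ1972}.
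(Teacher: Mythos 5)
Your proof is correct and its core — computing $\nabla\V(x)\cdot f(x)=\sum_k\kappa_k(Ax^d)^{y_k}\bigl(\ln(Ax^d/c)^{y_k'}-\ln(Ax^d/c)^{y_k}\bigr)$, bounding it with $a(\ln b-\ln a)\le b-a$, and telescoping over complexes via complex balance at $c$ — is exactly the paper's argument. You go further than the paper's proof, which stops at $\nabla\V\cdot f\le 0$: you also verify positivity of $\V$ via strict convexity of each $\psi_i$ and pin down the equality case on a stoichiometric compatibility class; these additions are sound (the uniqueness claim really rests on strict convexity of $\V$ restricted to the class rather than on the literal mass-action statement of Horn--Jackson, but that is a minor point) and make the verification of the paper's own definition of Lyapunov function complete.
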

\begin{proof}
We have
\begin{align*}
\nabla \V(x)&=(d_1\ln(x_1)-\ln(c_1)+\ln(A_1),\ldots,d_m\ln(x_m)-\ln(c_m)+\ln(A_m)).
\end{align*}
Let $f$ be the right-hand side of \eqref{eq:56789876} and recall that $c$ is a complex balanced equilibrium of the mass action model.  We have
\begin{align*}
\nabla \V(x)\cdot f(x)&=\sum_{k}\kappa_k(Ax^d)^{y_k}\left(\ln(x^d)-\ln\left(\frac{c}{A}\right)\right)\cdot(y_k'-y_k)\\
&=\sum_k \kappa_k c^{y_k}\frac{(Ax^d)^{y_k}}{c^{y_k}}\left(\ln\left(\frac{Ax^d}{c}\right)^{y_k'}-\ln\left(\frac{Ax^d}{c}\right)^{y_k}\right)\\
&\leq \sum_k \kappa_k c^{y_k}\left(\left(\frac{Ax^d}{c}\right)^{y_k'}-\left(\frac{Ax^d}{c}\right)^{y_k}\right)\\
&=\sum_k (Ax^d)^{y_k'}\kappa_kc^{y_k-y_k'}-(Ax^d)^{y_k}\kappa_k\\
&=\sum_{z\in\C}\left[\sum_{k: y_k'=z}(Ax^d)^{y_k'}\kappa_kc^{y_k-y_k'}- \sum_{k:y_k=z}(Ax^d)^{y_k}\kappa_k\right]\\
&=\sum_{z\in\C} (Ax^d)^z\left[\sum_{k: y_k'=z} \kappa_kc^{y_k-y_k'}- \sum_{k:y_k=z} \kappa_k\right]=0,
\end{align*}
where we used the inequality $a(\ln{b}-\ln{a})\leq b-a$ and the last equation holds because $c$ is the complex balanced equilibrium for the mass-action system.
\end{proof}

\appendix
\section{Appendix: Required Lemmas}
\begin{lemma}\label{lemma 3}
Here we need to provide an asymptotic estimate as $C\to\infty$ of the form
\begin{align*}
\ln\sum_{x\in \mathbb{Z}_{\ge 0}^m} \frac{C^x}{(x!)^d}\sim \sum_m (d_iC_i^{1/d_i} + a \ln C_i + b ) 
\end{align*}
where $a,b$ are constants that do not depend on $C$.
\end{lemma}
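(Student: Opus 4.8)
The plan is to exploit the product structure of the sum and then reduce everything to a one--dimensional saddle--point (Laplace) estimate. Since $\frac{C^x}{(x!)^d}=\prod_{i=1}^m \frac{C_i^{x_i}}{(x_i!)^{d_i}}$, the sum factors over coordinates, and hence
\[
\ln\sum_{x\in\Z_{\ge0}^m}\frac{C^x}{(x!)^d}=\sum_{i=1}^m\ln\Big(\sum_{x_i=0}^\infty\frac{C_i^{x_i}}{(x_i!)^{d_i}}\Big).
\]
Thus it suffices to show, for a single real parameter $d>0$, that $\ln S(C)\sim d\,C^{1/d}+a\ln C+b$ as $C\to\infty$, where $S(C)=\sum_{x\ge0}C^x/(x!)^d$ and $a,b$ are constants (depending on $d$, but not on $C$). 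I would emphasize that, because $d$ is only assumed real and positive in Assumption \ref{class}, one cannot simply quote the classical asymptotics of the Bessel/hypergeometric functions ${}_0F_{q}$, so a direct analysis is required.

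For the one--dimensional estimate I would run Laplace's method on the summand $a_x=e^{g(x)}$ with $g(x)=x\ln C-d\ln\Gamma(x+1)$. First I would establish unimodality: the ratio $a_{x+1}/a_x=C/(x+1)^d$ is strictly decreasing and crosses $1$ near $x^*:=C^{1/d}$, so the sequence $a_x$ is log-concave with a single peak at $x^*+O(1)$ and decays geometrically away from it. Next, using $g'(x)=\ln C-d\psi(x+1)$ and $g''(x)=-d\psi'(x+1)\sim -d/x$, I would Taylor-expand $g(x)=g(x^*)-\tfrac{d}{2x^*}(x-x^*)^2+\cdots$ and approximate the sum by the corresponding Gaussian integral, which yields $S(C)\sim e^{g(x^*)}\sqrt{2\pi x^*/d}$. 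Finally, Stirling's formula gives the peak value $g(x^*)=d\,C^{1/d}-\tfrac12\ln C-\tfrac{d}{2}\ln(2\pi)+o(1)$; combining this with $\tfrac12\ln(2\pi x^*/d)$ from the Gaussian width and with $x^*=C^{1/d}$ produces the claimed form, with $a=\frac{1-d}{2d}$ and $b$ an explicit constant. (One sanity-checks $d=1$, giving $S=e^C$, and $d=2$, giving the modified Bessel asymptotic $S=I_0(2\sqrt C)$.)

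The main obstacle is making the Laplace step rigorous for a sum rather than an integral. One must control (i) the error in Stirling's approximation uniformly over the window of width $\sim C^{1/(2d)}$ about $x^*$, where $x$ is large so the $O(1/x)$ Stirling remainder is harmless; (ii) the error in replacing $\sum$ by $\int$, via an Euler--Maclaurin comparison; and (iii) the contribution of the geometric tails outside the window. The log-concavity established above is the key tool for (iii), and it also guarantees that the relevant deviations $|x-x^*|$ stay of order $(x^*)^{1/2}=C^{1/(2d)}$, on which the cubic term $g'''(x^*)(x-x^*)^3\sim (x^*)^{-1/2}$ in the Taylor expansion is negligible. I would close by noting that, for the application in Theorem \ref{main theorem 3}, only the leading term $d\,C^{1/d}$ matters: after the substitution $C_i=V^{d_i}c_iA_i^{-1}$ and division by $V$, the corrections $a\ln C_i+b$ vanish as $V\to\infty$, so the precise values of the constants are immaterial there.
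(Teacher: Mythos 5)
Your argument is correct and arrives at the same answer as the paper (your $a=\tfrac{1-d}{2d}$ matches the prefactor $C^{(1-d)/2d}$ the paper obtains), but the route through the one-dimensional estimate is genuinely different. The paper first uses Stirling to show $(x!)^d \sim \Gamma(xd+(d+1)/2)/d^{xd}$, so that $\sum_x C^x/(x!)^d \sim \sum_x (Cd^d)^x/\Gamma(xd+(d+1)/2)$, and then quotes a tabulated exponential asymptotic for such Wright-type series (Example 2.3.1 of \cite{PW1986}) to conclude the sum behaves like $cC^{(1-d)/2d}e^{dC^{1/d}}$; the multivariate case is handled by the same product factorization you use. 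Your proposal instead runs Laplace's method directly on the summand $e^{g(x)}$ with $g(x)=x\ln C-d\ln\Gamma(x+1)$: peak at $x^*=C^{1/d}$, Gaussian window of width $C^{1/(2d)}$, log-concavity of the terms to control the tails, and Euler--Maclaurin to pass from sum to integral. What your approach buys is self-containedness --- no appeal to asymptotics of ${}_0F_q$-type series for non-integer $d$ --- together with an explicit accounting of where each piece of $dC^{1/d}+a\ln C+b$ comes from (your sanity checks at $d=1$ and $d=2$ confirm the constants); the cost is that the technical bookkeeping (uniform Stirling error on the window, sum-versus-integral comparison, tail estimates) must actually be carried out, which you outline but do not execute. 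What the paper's approach buys is brevity, at the price of relying on an external reference and a somewhat loose chain of $\sim$ manipulations of the Gamma function. Your closing observation --- that after substituting $C_i=V^{d_i}c_iA_i^{-1}$ and dividing by $V$ only the leading term $d_iC_i^{1/d_i}$ survives --- is exactly how the lemma is deployed in the proof of Theorem \ref{main theorem 3}.
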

\begin{proof}
When $m=1$, by Stirling estimation (and ignoring the  factor of $\sqrt{2\pi}$), we have
\begin{align*}
(x!)^d \sim \bigg(\sqrt{x}\frac{x^x}{e^x}\bigg)^d = \sqrt{xd}\frac{{(xd)}^{xd}}{e^{xd}d^{xd}}x^{(d-1)/2}\sim \Gamma(xd+1)\frac{x^{(d-1)/2}}{d^{xd}}\sim \frac{\Gamma(xd+1+(d-1)/2)}{d^{xd}},
\end{align*}
where the last estimation is due to the fact that $\ds\lim_{n\to\infty}\frac{\Gamma(n+\alpha)}{n^\alpha\Gamma(n)}=1$.

%We can get rid of the term $x^{(d-1)/2}$ by making some loose estimation
%\begin{align*}
%\frac{\Gamma(xd+1)}{d^{xd}}<\Gamma(xd+1)\frac{x^{(d-1)/2}}{d^{xd}}<\frac{\Gamma(xd+d)}{d^{xd}}
%\end{align*}
%Now we have
%\begin{align*}
%\sum_{x\in \mathbb{Z}}\frac{(Cd^d)^x}{\Gamma(xd+d)}<\sum_{x\in \mathbb{Z}}\frac{C^x}{(x!)^d}<\sum_{x\in \mathbb{Z}}\frac{(Cd^d)^x}{\Gamma(xd+1)}
%\end{align*}
Thus 
\begin{align}\label{eq4545}
\sum_{x\in \mathbb{Z}}\frac{C^x}{(x!)^d}\sim\sum_{x\in \mathbb{Z}}\frac{(Cd^d)^x}{\Gamma(xd+(d+1)/2)}.
\end{align}
The asymptotic behavior of the right hand side in \eqref{eq4545} can be found in Example 2.3.1 of \cite{PW1986}. In particular, its asymptotic character is exponential since we are considering $C$ having real values only
\begin{align*}
\sum_{x\in \mathbb{Z}}\frac{(Cd^d)^x}{\Gamma(xd+(d+1)/2)} \sim \frac{1}{d}(Cd^d)^{(1-d)/2d}e^{(Cd^d)^{1/d}}
= c C^{(1-d)/2d}e^{dC^{1/d}} 
\end{align*}
%and
%\begin{align*}
%\sum_{x\in \mathbb{Z}}\frac{(Cd^d)^x}{\Gamma(xd+d)} \approx c_d(Cd^d)^{-1}e^{dC^{1/d}}(1+O(\frac{1}{(Cd^d)^{1/d}})) \approx c_d C^{-1} e^{dC^{1/d}}(1+O(\frac{1}{C^{1/d}}))
%\end{align*}
where $c$ is some constant depending on $d$.
Thus, taking log we have
\begin{align*}
\ln\bigg(\sum_{x\in \mathbb{Z}} \frac{C^x}{(x!)^d}\bigg) \sim dC^{1/d} + a \ln C + b
\end{align*}
where $a,b$ are some constants depending on $d$.\\\\
When $m>1$, we have
\begin{align*}
\ln\bigg(\sum_{x\in \mathbb{Z}^m} \frac{C^x}{(x!)^d}\bigg)=\ln\bigg(\prod_m \sum_{x\in \mathbb{Z}} \frac{C_i^{x_i}}{(x_i!)^{d_i}}\bigg)\sim \sum_m (d_iC_i^{1/d_i} + a \ln C_i + b ),
\end{align*}
where we have applied the $m=1$ case in the final step.
\end{proof}

In the following lemma, for sequences $a_V$ and $b_V$ we write $a_V \sim b_V$, as $V\to \infty$, for 
\[
\lim_{V\to \infty} (a_V-b_V) = 0.
\]
  
\begin{lemma}\label{lemma 5}
Let $\theta_i$ satisfy Assumption \ref{class}.  Then for a fixed $c\in \Z^m_{>0}$,
\begin{align*}
\frac{1}{V}\ln\bigg(\sum_{x\in \Z^m}  \frac{(V^dc)^{x}}{\prod_{i=1}^m\theta_i(1)\cdots\theta_i(x_i)}\bigg) \sim \frac{1}{V}\ln\bigg(\sum_{x\in \Z^m}  \frac{(V^dc)^{x}}{\prod_{i=1}^m A_i^{x_i}(x_i!)^{d_i}}\bigg) =\frac{1}{V}\ln \bigg(\sum_{x\in \Z^m}  \frac{(V^dcA^{-1})^{x}}{\prod_{i=1}^m (x_i!)^{d_i}}\bigg),
\end{align*}
as $V\to \infty$.
\end{lemma}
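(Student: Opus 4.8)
The plan is to first collapse the statement to one dimension and then compare the two series termwise. Because every summand factors over the species index $i$, both multi-index sums split as products over $i$ of one-variable sums, and $\ln$ converts each product into a sum over $i$; hence it suffices to prove, for each fixed $i$ (and writing $d=d_i$, $A=A_i$, $\theta=\theta_i$, $c=c_i$ to lighten notation), that
\[
\frac1V\ln\Bigl(\sum_{x\ge 0}\frac{(V^{d}c)^{x}}{\theta(1)\cdots\theta(x)}\Bigr)-\frac1V\ln\Bigl(\sum_{x\ge 0}\frac{(V^{d}c)^{x}}{A^{x}(x!)^{d}}\Bigr)\longrightarrow 0.
\]
Here I use that for the product-form measure \eqref{general prodform} to be defined one has $\theta(x)>0$ for $x\ge 1$, so all the quantities below are positive and finite, and logarithms are legitimate.

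The key device is the ratio $r(x)=\prod_{j=1}^{x}\tfrac{\theta(j)}{A\,j^{d}}$, which records the discrepancy between $\theta(1)\cdots\theta(x)$ and $A^{x}(x!)^{d}$, since $\theta(1)\cdots\theta(x)=A^{x}(x!)^{d}\,r(x)$. By Assumption \ref{class} we have $\theta(j)/(A\,j^{d})\to 1$, so $\ln\bigl(\theta(j)/(A\,j^{d})\bigr)\to 0$, and by the Cesàro mean theorem $\tfrac1x\ln r(x)=\tfrac1x\sum_{j=1}^{x}\ln\tfrac{\theta(j)}{A\,j^{d}}\to 0$. Consequently, given any $\varepsilon>0$ there is an $N$ with $e^{-\varepsilon x}\le r(x)^{-1}\le e^{\varepsilon x}$ for all $x>N$; this is the quantitative input that drives the comparison.

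Next I would sandwich the $\theta$-series between two rescaled copies of the $\gamma$-series $\sum_{x}(V^{d}c/A)^{x}/(x!)^{d}$. Writing the generic term as $(V^{d}c)^{x}/(\theta(1)\cdots\theta(x))=r(x)^{-1}\,(V^{d}c/A)^{x}/(x!)^{d}$ and using $e^{\pm\varepsilon x}(V^{d}c)^{x}=(V^{d}ce^{\pm\varepsilon})^{x}$, the estimate above gives, for the tail $x>N$, that the $\theta$-series is bounded above (resp.\ below) by the $\gamma$-series evaluated at $ce^{\varepsilon}$ (resp.\ $ce^{-\varepsilon}$), minus the finite head $x\le N$. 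That head is a fixed finite sum of terms each growing only polynomially in $V$, so it contributes $o(V)$ to the logarithm and is negligible at the $\tfrac1V\ln$ scale. Applying Lemma \ref{lemma 3} to each rescaled $\gamma$-series yields $\tfrac1V\ln\to d\,(ce^{\pm\varepsilon}/A)^{1/d}$, while the unscaled $\gamma$-series gives $\tfrac1V\ln\to d\,(c/A)^{1/d}$. Taking $\limsup_V$ and $\liminf_V$ of the $\theta$-series and then letting $\varepsilon\to 0$ forces both to equal $d\,(c/A)^{1/d}$, which is exactly the limit of the $\gamma$-series; summing over $i$ then proves the lemma.

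I expect the main obstacle to be the interchange of limits together with the non-uniformity of the bound on $r(x)^{-1}$: the sandwich $e^{-\varepsilon x}\le r(x)^{-1}\le e^{\varepsilon x}$ only holds past the threshold $x>N(\varepsilon)$, so one must separately verify that the finite head is exponentially negligible and then pass the limits in the correct order (take $V\to\infty$ first via $\limsup/\liminf$, then send $\varepsilon\to 0$). Establishing $\tfrac1x\ln r(x)\to 0$ from Assumption \ref{class} and confirming positivity of the $\theta$-products are the other points requiring care; everything else is bookkeeping combined with the asymptotics already recorded in Lemma \ref{lemma 3}.
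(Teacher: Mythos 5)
Your proposal is correct and follows essentially the same route as the paper: reduce to $m=1$ by factoring the product over species, split the sum into a polynomially-growing head and a tail, sandwich the tail between copies of the reference series $\sum_x (V^dc/A)^x/(x!)^d$ with $c$ perturbed by a factor tending to $1$, apply Lemma \ref{lemma 3}, and send the perturbation to zero after $V\to\infty$. The only cosmetic difference is that you package the multiplicative error via the Ces\`aro bound $e^{-\varepsilon x}\le r(x)^{-1}\le e^{\varepsilon x}$, whereas the paper bounds each factor $\theta(j)/(Aj^d)$ between $1-\alpha$ and $1+\alpha$ directly; the two devices are interchangeable here.
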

\begin{proof}
Let first consider the case when $m=1$.

Let $\alpha>0$. From Assumption \ref{class} we have $\lim_{x\to\infty}\ds\frac{\theta(x)}{x^{d}}=A_i$.  Therefore, there exists an $N>0$ such that if $x>N$, then
\begin{align*}
A-\alpha A<\frac{\theta(x)}{x^{d}}<A+\alpha A
\end{align*} 
which is equivalent to
\begin{align}\label{eq1314}
1-\alpha <\frac{\theta(x)}{Ax^{d}}<1+\alpha. 
\end{align} 
Consider
\begin{align*}
\frac{1}{V}\ln\bigg(\sum_{x\in \Z}&  \frac{(V^dc)^{x}}{\theta(1)\cdots\theta(x)}\bigg)- \frac{1}{V}\ln\bigg(\sum_{x\in \Z}  \frac{(V^dc)^{x}}{ A^{x}(x!)^{d}}\bigg)\\
=&\frac{1}{V}\ln\bigg(\ds\frac{\sum_{x\leq N}  \frac{(V^dc)^{x}}{\theta(1)\cdots\theta(x)}+\sum_{x>N}  \frac{(V^dc)^{x}}{\theta(1)\cdots\theta(x)}}{\sum_{x\in \Z}  \frac{(V^dc)^{x}}{ A^{x}(x!)^{d}}}\bigg)\\
=&\frac{1}{V}\ln\bigg(\ds\frac{R+S}{T}\bigg)\\
\end{align*}
where 
\begin{align*}
R=&\sum_{x\leq N}  \frac{(V^dc)^{x}}{\theta(1)\cdots\theta(x)} =O(V^{dN})\\
T=&\sum_{x\in \Z}  \frac{(V^dc)^{x}}{ A^{x}(x!)^{d}} = O(e^{d(V^dc/A)^{1/d}})=O(e^{dV(c/A)^{1/d}}),
\end{align*}
where the sum in $R$ is over all $x\in \Z_{\ge 0}$  and the estimation of $T$ is again based on Lemma \ref{lemma 3}. 

We have
\begin{align*}
S&=\sum_{x>N}  \frac{(V^dc)^{x}}{\theta(1)\cdots\theta(x)}\\
&=\sum_{x>N}  \frac{(V^dc)^{x}}{ A^{x}(x!)^{d}}\frac{A^{x}(x!)^{d}}{\theta(1)\cdots\theta(x)}\\
&=\frac{A^{N}(N!)^{d}}{\theta(1)\cdots\theta(N)}\sum_{x>N}  \frac{(V^dc)^{x}}{A^{x}(x!)^{d}}\frac{ A(N+1)^{d_i}\cdots A(x)^{d}}{\theta(N+1)\cdots\theta(x)}\\
&=c_N \sum_{x>N}  \frac{(V^dc)^{x}}{ A^{x}(x!)^{d}}\frac{ A(N+1)^{d}\cdots A(x)^{d}}{\theta(N+1)\cdots\theta(x)}.
\end{align*}
Using \eqref{eq1314}, we have
\begin{align*}
c_N\sum_{x>N}  \frac{(V^dc)^{x}}{A^{x}(x!)^{d}}\frac{1}{(1+\alpha)^{x-N}}<S<c_N\sum_{x>N}  \frac{(V^dc)^{x}}{ A^{x}(x!)^{d}}\frac{1}{(1-\alpha)^{x-N}}.
\end{align*}
Thus 
\begin{align*}
c_N\sum_{x>N}  \frac{(V^dc(1+\alpha)^{-1})^{x}}{ A^{x}(x!)^{d}}<S<c_N\sum_{x>N}  \frac{(V^dc(1-\alpha)^{-1})^{x}}{ A^{x}(x!)^{d}}
\end{align*}
where $LHS = O(e^{dV(c/A(1+\alpha))^{1/d}})$ and $RHS=O(e^{dV(c/A(1-\alpha))^{1/d}})$ similar to how we estimated $T$. This, together with the fact $R\ll S$ for $V$ large enough, gives us
\begin{align*}
\frac{1}{V}\ln\bigg(a_{N}\frac{e^{dV(c/A)^{1/d}}}{e^{dV(c/A(1+\alpha))^{1/d}}}\bigg)<\frac{1}{V}\ln\bigg(\frac{R+S}{T}\bigg)<\frac{1}{V}\ln\bigg(b_N\frac{e^{dV(c/A)^{1/d}}}{e^{dV(c/A(1-\alpha))^{1/d}}}\bigg)
\end{align*}
where $a_N, b_N$ are some constants depending on $N$. Thus
\begin{align*}
\frac{\ln a_N}{V} + d((c/A)^{1/d}-(c/A(1+\alpha))^{1/d})<\frac{1}{V}\ln\bigg(\frac{R+S}{T}\bigg)<\frac{\ln b_N}{V} + d((c/A)^{1/d}-(c/A(1-\alpha))^{1/d}).
\end{align*}
Now for an $\epsilon>0$, pick $\alpha$ such that 
\[
\ds\max\{|d((c/A)^{1/d}-(c/A(1+\alpha))^{1/d})|,|d((c/A)^{1/d}-(c/A(1-\alpha))^{1/d})|\}<\frac{\epsilon}{2}.
\] 
Then we pick $V$ large enough so that 
\[\ds\max\{|a_N/V\big|,|b_N/V|\}<\frac{\epsilon}{2},\]then
\begin{align*}
\bigg|\frac{1}{V}\ln\bigg(\frac{R+S}{T}\bigg)\bigg|<\frac{\epsilon}{2}+\frac{\epsilon}{2}=\epsilon.
\end{align*}
Thus $\ds\frac{1}{V}\ln\bigg(\frac{R+S}{T}\bigg)\to 0$ as $V\to \infty$ and we have finished the case $m=1$.

For $m>1$, we have 
\begin{align*}
\frac{1}{V}\ln\bigg(\sum_{x\in \Z^m}  \frac{(V^dc)^{x}}{\prod_{i=1}^m\theta_i(1)\cdots\theta_i(x_i)}\bigg) &= \frac{1}{V} \ln\bigg( \prod_{i=1}^m\sum_{x_i\in \Z}\frac{(V_i^{d_ic_i})^{x_i}}{\theta_i(1)\cdots\theta_i(x_i)}\bigg)\\
&=\frac{1}{V} \sum_{i=1}^m\ln\bigg( \sum_{x_i\in \Z}\frac{(V_i^{d_ic_i})^{x_i}}{\theta_i(1)\cdots\theta_i(x_i)}\bigg)\\
&\sim \frac{1}{V} \sum_{i=1}^m \ln\bigg(\sum_{x_i\in \Z}  \frac{(V_i^{d_ic_i})^{x}}{A_i^{x_i}(x_i!)^{d_i}}\bigg) \\
&=\frac{1}{V} \ln\bigg(\prod_{i=1}^m \sum_{x_i\in \Z}  \frac{(V_i^{d_ic_i})^{x}}{A_i^{x_i}(x_i!)^{d_i}}\bigg)\\
&=\frac{1}{V} \ln\bigg(\sum_{x\in \Z^m}  \frac{(V^dc)^{x}}{\prod_{i=1}^mA_i^{x_i}(x_i!)^{d_i}}\bigg).
\end{align*}
Note that here the asymptotic analysis still holds after finite addition, since the asymptotic relation we prove for the case $m=1$ is slightly stronger than the usual definition of asymptotic. 
\end{proof}
 \bibliographystyle{plain}
\bibliography{NM}

\end{document}